\newtheorem{theorem}{Theorem}[section]
\newtheorem{proposition}[theorem]{Proposition}
\newtheorem{lemma}[theorem]{Lemma}
\theoremstyle{definition}
\newtheorem{remark}[theorem]{Remark}
\numberwithin{equation}{section}
\title[Constructions for the  three-distance  theorem]{Some constructions for the higher-dimensional  three-distance  theorem}
\author[V. Berth\'e]{Val\'erie Berth\'e}
\address{IRIF, CNRS UMR 8243, Universit\'e Paris Diderot -- Paris 7, Case 7014, 75205 Paris Cedex 13, France}
\email{berthe@irif.fr}
\author[D. H. KIm]{Dong Han Kim}
\address{Department of Mathematics Education
Dongguk University - Seoul
30 Pildong-ro 1-gil, Jung-gu
Seoul, 04620 Korea}
\email{kim2010@dongguk.edu}
\date{}
\thanks{This work was supported by the Agence Nationale de la Recherche through the project   DynA3S (ANR-13-BS02-0003),
National Research Foundation of Korea (NRF-2015R1A2A2A01007090) and  by the Fondation Sciences Math\'ematiques de Paris (FSMP)  through the  support for  the  visit of  Dong Han Kim  at IRIF in 2016.}
\begin{document}

\baselineskip=17pt

\begin{center}
This paper is dedicated to Robert Tijdeman  on the occasion \\  of his 75th birthday.
\end{center}

\begin{abstract}
For a given real number  $\alpha$, let us place the fractional parts of  the points $0, \alpha, 2 \alpha,$
$ \cdots, (N-1) \alpha$
on the unit circle. These points partition the unit circle into  intervals  
having at most three lengths, one being the sum of the other two. This is the three distance theorem.
We consider  a  two-dimensional version  of the three distance theorem obtained by placing on the unit circle  the points
$ n\alpha+ m\beta $, for $0 \leq n,m < N$.
We  provide  examples of      pairs of real numbers $(\alpha,\beta)$, with $1,\alpha, \beta$    rationally independent,   for which  there  are  finitely many  lengths between  successive points  (and in fact, seven lengths), with  $(\alpha,\beta)$    not  badly approximable, as well
as examples for which there   are   infinitely many  lengths. \end{abstract}

\subjclass[2010]{Primary 11J13; Secondary 11J70, 11J71, 11B75, 11A55}

\keywords{Gap theorems, continued fractions, distribution modulo 1}

\maketitle
\section{Introduction}

For a given real number  $\alpha$ in $(0,1)$, let us place the points $\{0\}, \{\alpha \},$ 
$\{2 \alpha\},$$\cdots,\{ (N-1) \alpha\}$ on the unit circle, where $\{x\}$ denotes as usual the fractional part of $x$. These points partition the unit circle into $N$ intervals  
having at most three lengths, one being the sum of the other two.
This property is known as the {\em three distance theorem} and can be seen as 
a geometric interpretation
of  good 
approximation properties of  the Farey partial convergents in the 
continued fraction expansion of $\alpha$. In the literature, this theorem is  called
the Steinhaus theorem,      the three length, the three gap,
 or else,  the three step theorem.

The three distance theorem was 
initially
conjectured by Steinhaus, first proved  V. T. S\'os
 \cite{Sos:1958} and    Sur\'anyi \cite{Suranyi:1958},   and then by
  Slater \cite{Slater:1964}, \'Swierczkowski \cite{Swier:59},
 Halton \cite{Halton:1965}.  A survey of the different approaches used by these authors is to be found for instance 
in \cite{Albe,Ravenstein:1988,Slater:1967,Langevin:1991}. More recent proofs have also been given  in 
  \cite{Ravenstein:1988,Langevin:1991},  or in   \cite{Marklof17}   relying  on  the  properties of  space of two-dimensional Euclidean lattices. 
 See also \cite{Bleher:1991,PSZ:2016} for the  study of the  limiting distribution of the gaps.

 There  exist  numerous generalizations  of the three gap theorem.
Let us quote   for instance generalizations     for  groups \cite{FriedSos:1992},  for some isometries of compact Riemannian manifolds  \cite{BringerSchmidt:2008},
or else for interval exchange transformations \cite{Taha}.
Among generalizations, there are  two  natural   Diophantine frameworks that are dual, namely distance theorems  for toral translations
on the $d$-dimensional  torus ${\mathbb T}^d$   (see e.g. \cite{Chev:2007,Chevallier:2014,Vijay:2008}),  and  distance theorems  
 for   linear forms in $d$ variables on the one-dimensional torus  ${\mathbb T}$. This is  the framework of the present paper,
where we focus on  linear forms in two variables,  and   consider
 points 
$m\alpha+n \beta$, for $0 \leq n,m < N$,  in   ${\mathbb T}$.

This  generalization has been  considered  by Erd\"os  (as recalled in \cite{GeelenSimpson}) and  also  in 
 \cite{Liang:1979,ChungGraham,GeelenSimpson,FH:95,Chevallier,Bleher:12,HaynesMarklof17}.
 See also  \cite{CGVZ:2002} for  the   number of so-called primitive gaps.
In particular,  the following is proved  in  \cite{Chevallier}.  Let  $\alpha_1, \ldots,\alpha_d \in {\mathbb T}$ ($d \geq 3$) and
$2 \leq n_1 \leq \ldots \leq n_d $ be  integers.
The set
$\left\{\sum_{i=1}^{d} k_i \alpha_i,\ 0 \leq k_i < n_i, \ i=1,\ldots,d\right\}$
divides ${\mathbb T}$ into intervals  whose lengths   take at most $\prod_{i=1}^ {d-1} n_i +3 \prod_{i=1}^ {d-2}n_i +1$  values.
When $d=2$,   the upper  bound  is $N+3$  for  the case of interest here ($m\alpha+n \beta$, for $0 \leq n,m < N$), as proved in \cite{GeelenSimpson}.

There are natural cases where it is known that  the number of distances  is bounded (with respect to $N$, for the points 
$m\alpha+n \beta$, for $0 \leq n,m < N$).   This is the case if $ 1, \alpha, \beta$ are rationally dependent (this has been  proved by Holzman,
 as recalled in \cite{GeelenSimpson}).
Badly approximable  vectors  $(\alpha,\beta)$  have also  been proved by Boshernitzan and Dyson to   produce   a finite  number of distances.
 For a proof, see  \cite{Bleher:12}.
Nevertheless,   it is proved in   \cite{HaynesMarklof17} that 
 the number of  lengths  is generically unbounded, with an approach via homogeneous dynamics based on the  ergodic properties of the  diagonal action on the space of  lattices.
 However,  no explicit examples of this generic situation  were  known.  The  object of the present paper is  to  construct such examples.

Our main result is the following.
\begin{theorem}\label{thm}
Consider the set 
$ E_{N} (\alpha,\beta)  := \{ n\alpha + m\beta \in \mathbb T \,  :\,  0 \le n,m < N \}$, and let 
 $\Delta (E_{N} (\alpha,\beta))$   stand for  the set of distances between neighbor points of $E_{N} (\alpha,\beta)$.
We provide  effective constructions for  the following existence results.

\begin{enumerate}
\item[(i)] There exist   $(\alpha, \beta)$,  with $1, \alpha, \beta$  rationally independent   and   $(\alpha, \beta)$  not badly approximable,   such that:
$$    \forall  N , \  \# \Delta( E_{N} (\alpha,\beta)  ) \le 7. $$

\item[(ii)]  There exist  $(\alpha, \beta)$, with  $1, \alpha, \beta$   rationally independent,  such that:
$$\limsup_{N \to \infty} \# \Delta( E_{N} (\alpha,\beta)  ) = \infty. $$

\end{enumerate}
\end{theorem}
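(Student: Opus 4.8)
The plan is to prove both parts by explicit, complementary constructions, in each of which $\beta$ is a Liouville‑type real whose rational approximations are tuned relative to the continued fraction data of a fixed badly approximable $\alpha$ (with best‑approximation denominators $q_n$). The common engine is \emph{rationalization}: if $\beta\approx p/q$ with $q\le N$, then every point of $E_N(\alpha,\beta)$ differs by at most $N|\beta-p/q|$ from the corresponding point of $E_N(\alpha,p/q)$. For the rationalized set one has a clean description: when $q\le N$ the numbers $\{mp/q\bmod 1\}$ run through all of $\{j/q:0\le j<q\}$, so $E_N(\alpha,p/q)=\bigcup_{j<q}\bigl(j/q+\{n\alpha:0\le n<N\}\bigr)$; projecting to $[0,1/q)$ identifies this with a rescaled copy of the one–dimensional three‑distance set of $q\alpha$ on $N$ points, hence $E_N(\alpha,p/q)$ has at most three gap lengths, say $G_1\le G_2\le G_3$. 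If now $|\beta-p/q|$ is much smaller than every $G_i$, the perturbation only ``splits'' each multiply‑covered point of $E_N(\alpha,p/q)$ into a short cluster: all $n\alpha+m\beta$ with a fixed value of $n\alpha+mp/q$ differ by multiples of $q\beta\equiv q(\beta-p/q)\pmod 1$, so each cluster is an arithmetic progression with a common tiny step $\eta:=\|q\beta\|$ and length $N/q$ or $N/q\pm 1$. Reading the gaps of this picture gives $\eta$ inside clusters, and $G_i$ minus one of two possible cluster‑spans between clusters, for each $i$ — at most $2\cdot 3+1=7$ lengths. This is the source of the constant $7$.

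For part (i) one wants this mechanism to apply for \emph{every} $N$. The idea is to build $\beta$ along a rapidly growing divisibility chain, e.g.\ $\beta=\sum_k 1/Q_k$ with $Q_1\mid Q_2\mid\cdots$ and $Q_{k+1}$ enormously larger than $Q_k$, so that $\|Q_k\beta\|$ is super‑polynomially small in $Q_k$; for $N$ in the $k$‑th range one picks $p/q$ with $q$ a suitable member of the chain satisfying $q\le N$ and $N\|q\beta\|$ below every gap of the rationalized model, and concludes $\#\Delta(E_N(\alpha,\beta))\le 7$ for all $N$. That $1,\alpha,\beta$ are rationally independent is immediate (take $\alpha$ algebraic; $\beta$ is transcendental by its fast rational approximation), and $(\alpha,\beta)$ is not badly approximable because the single linear form $0\cdot\alpha+Q_k\cdot\beta$ already witnesses $\|Q_k\beta\|=o(Q_k^{-2})$.

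For part (ii) one arranges the opposite: $\beta$ super‑Liouville but with approximation denominators \emph{multiplicatively independent} of the $q_n$, so that along a subsequence $N_k$ there is a very good approximation of the linear form (for instance $y_k\beta$ near an integer with $y_k\ll N_k$ and $\|y_k\beta\|\ll N_k^{-3}$, taking $N_k$ a large power of $y_k$) while no comparably good approximation exists at the intermediate scales that would permit a clean rationalization. In the Haynes–Marklof picture this drives the associated three‑dimensional lattice $\Lambda_{N_k}$ arbitrarily deep into the cusp; concretely $E_{N_k}(\alpha,\beta)$ then contains a long fibre which is an arithmetic progression of tiny step interleaving in a genuinely two‑dimensional way with the three‑distance structure of $\{n\alpha:0\le n<N_k\}$, and from this one extracts a number of adjacent pairs with pairwise distinct gap lengths that tends to infinity with $k$, giving $\limsup_N\#\Delta(E_N(\alpha,\beta))=\infty$. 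Rational independence is again secured by the choice of $\alpha$ and of the digits of $\beta$.

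I expect the main difficulty in (i) to be the transitions between consecutive scales: near the top of the $k$‑th range the cluster‑spans swell to the order of the gaps of the rationalized model and the ``clusters inside a three‑distance set'' picture degenerates, so the chain $(Q_k)$ — and the rule assigning an approximating denominator to each $N$ — must be chosen delicately enough that some clean model, and with it the bound $7$, survives on the whole of each range; certifying the \emph{exact} constant rather than mere boundedness is precisely the bookkeeping of how many extra gap lengths these boundary and transition effects can contribute. In (ii) the main difficulty is the quantitative lower bound $\#\Delta(E_{N_k}(\alpha,\beta))\to\infty$: one must show the two‑dimensional interleaving really produces unboundedly many \emph{distinct} gap values, either by the combinatorial count of adjacent pairs sketched above or through the dictionary between cusp excursions of $\Lambda_N$ and gap counts.
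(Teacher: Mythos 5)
Your proposal for part (i) rests on a rationalization step that cannot be made to cover all $N$, and this is a genuine obstruction rather than the ``delicate bookkeeping'' you defer. Quantitatively: for the clusters to stay disjoint you need the cluster span $N\|q\beta\|$ to be below the minimal gap of $E_N(\alpha,p/q)$, which for badly approximable $\alpha$ is of order $1/(Nq^2)$; so the denominator $q$ only serves $N\lesssim \bigl(q^2\|q\beta\|\bigr)^{-1/2}$. On the other hand the next denominator $q^{+}$ in your chain only becomes usable once $N\ge q^{+}$, while $\|q\beta\|\gtrsim 1/q^{+}$ for any reasonable chain (and $\|q\beta\|\ge (q+q^{+})^{-1}$ if the $q$'s are convergent denominators of $\beta$). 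Hence the good range for $q$ ends near $\sqrt{q^{+}}/q$, leaving an uncovered window $\sqrt{q^{+}}/q\lesssim N<q^{+}$ that no choice of the chain $(Q_k)$ can close: the requirement $N\|Q_k\beta\|\le c/(NQ_k^2)$ at $N\approx Q_{k+1}$ forces $Q_k^2Q_{k+1}\le c$, which is absurd. In that window the clusters interleave, and interleaving of a long tiny-step progression with the $\alpha$-structure is exactly the mechanism that produces \emph{unboundedly many} gaps (it is essentially what the paper exploits for part (ii)), so your construction is more likely to violate the bound there than to satisfy it. The paper sidesteps this entirely by coupling the continued fractions of $\alpha$ and $\beta$ through the relations $q'_k=b'_kq_k+1$ and $q_{k+1}=b_{k+1}q'_k+1$: then the neighbor map on each rectangle $E_{q_i,q'_j}$ is an exchange of $4$ sub-rectangles (Lemma~\ref{lem:exchange}), and arbitrary $N$ is reached by inducing (4 lengths become 6) and then re-inserting points (6 become 7) --- both $\alpha$ and $\beta$ end up Liouville-like, not just $\beta$.

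For part (ii) your mechanism (a near-resonance $\|y_k\beta\|\ll N_k^{-3}$ producing a long fibre of tiny step interleaving with the three-distance structure of $\alpha$) is plausible in spirit, but the statement you need --- that the interleaving yields a number of \emph{distinct} gap values tending to infinity --- is the entire content of the proof and is not supplied; ``one extracts a number of adjacent pairs with pairwise distinct gap lengths'' is an assertion, not an argument, and generic interleavings can produce coincidences. The paper makes this effective by arranging $q'_{4k+1}=q_{4k+1}+1$ and computing the neighbor map on $E_{q_{4k+1},q'_{4k+1}}$ explicitly in twelve cases (Proposition~\ref{prop:unbounded}, verified by checking the gap lengths sum to $1$), which exhibits the $a_{4k+1}$ pairwise distinct gaps $\|q_{4k+1}\alpha\|-(2a_{4k+1}-c-1)\delta_k$, $0\le c\le a_{4k+1}-1$, with $\delta_k=\|q'_{4k-1}\beta\|-\|q_{4k}\alpha\|>0$ and $a_{4k+1}\to\infty$. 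Without an analogue of that explicit count, part (ii) remains a heuristic.
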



Our proof  avoids the use of a higher-dimensional analogue of continued fractions.  We rely on the  (regular) continued fraction expansions of $\alpha$ and $\beta$ and we combine several `rectangular'  levels of points of the form $n  \alpha + m \beta$,  for  $0 \leq n <  N$ and  $0 \leq m  <  M$, where $N$ or $M$ is  a
   denominator of   a principal  convergent  of  $\alpha$ or  $\beta$.

We ask  the  question   of the minimality   of the number of lengths:  is it possible to find $(\alpha,\beta)$, 
with $1,\alpha, \beta$ rationally independent, such that   $      \# \Delta( E_{N} (\alpha,\beta)  ) \le 6$, for all $N$?

 As an application and motivation for this theorem, one deduces  results on frequencies of square factors in two-dimensional Sturmian words,  such as studied in  \cite{BertheVuillon,BertheTijdeman:2003}.
Two-dimensional   Sturmian words are  defined as codings  of ${\mathbb Z}^2$-actions by rotations on the one-dimensional torus ${\mathbb T}$. More precisely,
 let $\alpha,\beta,\rho$ be real numbers, with
$1,\alpha,\beta$ rationally independent, and $0<\alpha +\beta <1$.
A  two-dimensional Sturmian word
over the three-letter alphabet $\{1,2,3 \}$ (with parameters
$\alpha,\beta,\rho$) is defined 
 as
a function $f: {\mathbb Z} ^2 \rightarrow
\{1,2,3\}$, with,  
for all  $(m,n) \in {\mathbb Z} ^2$, $(f(m,n)=i \Longleftrightarrow
m\alpha +n \beta+ \rho \in I_i \mbox{ modulo }1),$
where  
either $ I_1=[0,\alpha),\ I_2=[\alpha,\alpha +\beta), \ I_3=[\alpha +\beta,1),$ or 
$I_1=(0,\alpha],\ I_2=(\alpha,\alpha +\beta], \ I_3=(\alpha +\beta,1].$
According to  \cite{BertheVuillon}, frequencies of square factors of  size $N$   are equal to the lengths obtained by  putting  on ${\mathbb T}$ the points
 $ - n \alpha  -m \beta$, for  $-1 \leq n \leq N-1$,   $ 0 \leq m \leq N$.
 One thus has a correspondence between lengths and  frequencies, whereas 
 gap theorems correspond to  return words.
 Note that the convergence toward frequencies  (expressed in terms of balance properties) has been considered in \cite{BertheTijdeman:2003}.
 More generally, for   results of the same flavor for cut and project sets generalizing the Sturmian framework, see \cite{HKWS:2016,HJKW17}.

 \subsection*{Contents of the paper}
  Let us briefly sketch the contents of this paper.
 Notation   are introduced in Section \ref{sec:firstlevel} together with a  basic lemma  (Lemma \ref{lem:exchange}) that allows one to  express  in a convenient way the
 clockwise neighbor of
 a  point of the form  $n \alpha+ m \beta$.
 A  construction providing    pairs $(\alpha,\beta)$   with   a bounded number of lengths is described in Section \ref{sec:bounded}, while
 the case of  an unbounded number of lengths is handled in Section \ref{sec:unbounded}:
  Statement (i) of Theorem \ref{thm} is proved in Section \ref{sec:bounded}, and  Statement (ii)  in Section \ref{sec:unbounded}.
 
 \subsection*{Acknowledgements}
 Our deepest gratitude goes to Robert Tijdeman who has been  a constant source of inspiration
   for his input in word combinatorics and discrete mathematics  through his deep and wide 
 understanding of  equidistribution theory.  We  would like to thank Alan Haynes for  pointing out the problem  (see \cite{OP:2016})
and  for stimulating discussions.
 We also  would like to thank  Damien  Jamet and Thomas Fernique for  computer simulations.    Lastly, we  gratefully thank the referee  for  his  valuable comments which helped to improve the manuscript. 

\section{Preliminaries}\label{sec:firstlevel}


Let $\mathbb T = \mathbb R / \mathbb Z$.
Let $\alpha, \beta$ be real  numbers  in $(0,1)$. 
{\em We  assume  in all this paper that  $1,  \alpha, \beta$ are rationally independent. }

For $q,q'$ positive integers,  we define
$$ E_{q,q'} (\alpha, \beta) := \{ n\alpha + m\beta \in \mathbb T\,  :\,  0 \le n < q, \ 0 \le m < q' \},$$
and when $q=q'$,  we  use the notation 
$E_{N} (\alpha,\beta) : = E_{q,q'} (\alpha, \beta) $, with $N:=q=q'$.  We furthermore consider  $$ {\mathcal E}_{q,q'}(\alpha,\beta) := \{ (n, m) \,  : \, 0 \le n < q, \ 0 \le m < q' \}.$$
We will also use the shorthand notation $ E_{q,q'}$,  $E_{N}$ and $ {\mathcal E}_{q,q'}$.

Points  in $E_{q,q'} (\alpha, \beta) $ are considered  as  positioned on the unit circle   oriented clockwise  endowed with the origin point  $0$.
The point $n\alpha + m\beta$   is thus considered as positioned  at distance $\{n\alpha + m\beta\}$ from the  origin point $0$.   
The point that is located  immediately   after $n\alpha + m\beta$ clockwise  on the unit circle, that is,  its  clockwise  {\em neighbor},  is denoted as  
 $\Phi_{q,q'}(n\alpha+ m\beta)$,
or $\Phi (n\alpha+ m\beta)$, if there is no  confusion.
This thus defines a  map $\Phi_{q,q'}$ on $E_{q,q'}$  called the {\em neighbor map}.
For two points $a,b$ in  $\mathbb T$, the interval $(a,b)$ in $\mathbb T$ corresponds to the interval considered clockwise  on the unit circle with  endpoints being respectively  $a$ and  $b$. 
The set $E_{q,q'}  $ thus partitions the  unit circle  into  disjoint  intervals
 $( n\alpha + m\beta, \Phi_{q,q'} (n\alpha+ m\beta))$, for $(n,m) \in {\mathcal E}_{q,q'} $.

For a finite subset $E$ of  $\mathbb T = \mathbb R / \mathbb Z$,  we denote  by $\Delta (E)$  the set of distances between neighbor points of $E$ (again with 
distances being  counted  clockwise).
 For any  $(n,m) $ in  ${\mathcal E}_{q,q'}$, $\Delta_{q,q'}(n,m)$ (or   $\Delta(n,m)$ if there is no confusion) stands for  the distance   between $n\alpha+ m\beta$ and  $\Phi_{q,q'} (n \alpha+ m \beta)$.

For  any positive integer $q$, we  define the nonnegative integer  $|n|_q$ as 
$$ |n|_q \equiv n  \pmod q,   \quad  \mbox{ and }   \quad  0\le |n|_q < q.$$
We will   consider  in the following the map $ n \mapsto |n+r|_q$, for  a given  integer $r$.
In particular,   if  $0 \le r< q$,   one has  $|n+r|_q = n+r$ if $ 0 \leq n < q - r$, and  $ |n+r|_q = n+r-q$ if $ q-r  \leq n < q $.

Let $q,q'$ be two   given positive integers.  We also  assume that  $q$ and $q'$ are coprime. Then,
for any   integers  $r,r'$ such that  $\textrm{gcd}(r,q)=1$, $\textrm{gcd}(r',q')=1$,  $0< | r|  < q$, $0< | r'|  < q'$,  the map  
$$ \varphi  _{q,q'}\colon {\mathcal E}_{q,q'}  \rightarrow {\mathcal E}_{q,q'} , \qquad  (n, m) \ \mapsto   \   \left( | n+r|_q ,   |m + r'|_{q'} \right) $$ 
is  a   cyclic permutation of  ${\mathcal E}_{q,q'}.$ 
We thus will be able to describe  the elements  of the set  ${\mathcal E}_{q,q'}$
as   the  elements  of the orbit of $(0,0)$ under  the map $\varphi_{q,q'}$. In particular,  for each $(n,m)$, with $0 \le n < q$, $0 \le m < q'$,  there exists a unique $k$,  with $0 \le k < q q'$,
satisfying
$(n,m) = \left(|kr|_{q}, |kr'| _{q'} \right)$.  
Indeed, since  $1, \alpha, \beta$ are rationally independent, the following  map $\phi_{q,q'}$ acting  on $E_{q,q'} (\alpha, \beta)$, and defined by\footnote{This map  is well-defined since   $1, \alpha, \beta$ are rationally independent.}
\begin{equation} \label{eq:varphi}
\phi_{q,q'} (n \alpha + m \beta)= \langle \varphi_{q,q'} (n,m), (\alpha,\beta)\rangle    
\end{equation}
 is easily seen to be    injective, and thus surjective.

Let $(a_i)_{i \geq 1} $, $(a'_j)_{j \geq 1} $ stand for the respective sequences  of  partial quotients of $\alpha$ and $\beta$ in their continued fraction expansions, and 
denote by $(q_i)_{i \geq 1}$, $(q'_j)_{j \geq 1}$ the denominators of  their principal convergents.
Note that we will make a strong use of 
\begin{equation}\label{eq:frac}
q_k \| q_{k-1} \alpha \| + q_{k-1} \| q_k \alpha\| = 1.
\end{equation}
Here we denote $\| t\|$ by the distance to the nearest integer of $t \in \mathbb R$.

We now consider
$
E_{q_i,q'_j} (\alpha, \beta) = \{ n\alpha + m\beta \in \mathbb T : 0 \le n < q_i, 0 \le m < q'_j\},
$
for indexes $i,j$ for which we assume that they   satisfy 
$ q'_j = b' q_i +1$ for some positive integer $b'$.  Note that $ b' q'_{j-1}$ is coprime with $q'_j$ since $b'$ and  $q'_{j-1}$ are coprime with $q'_j$. We  take $r:= -(-1)^i q_{i-1}$ and $r':= (-1)^j b' q'_{j-1}$. 
We   consider   the following  cyclic permutations acting respectively on   ${\mathcal E}_{q_i,q'_j} (\alpha, \beta)$ and   $E_{q_i,q'_j} (\alpha, \beta)$:
$$ \varphi _{q_i,q'_j} \colon    n\alpha + m\beta \ \mapsto   (\left |  n - (-1)^i q_{i-1}\right |_{q_i}  ,\left | m + (-1)^j b' q'_{j-1} \right|_{q'_j} ) ,$$ 
$$ \phi _{q_i,q'_j} \colon    n\alpha + m\beta \ \mapsto   \  \left | n - (-1)^i q_{i-1}\right |_{q_i}  \,  \alpha + \left|m + (-1)^j b' q'_{j-1} \right|_{q'_j} \, \beta. $$
Lemma~\ref{lem:exchange}  below shows 
that, under Assumption  \eqref{eq:assumption}, the clockwise neighbor point $\Phi_{q_i,q'_j} (n \alpha+ m\beta)$ of $n \alpha+ m\beta$ in $ E_{q_i,q'_j} (\alpha, \beta)$ is exactly  $\phi (n \alpha+ m\beta)$, by using the   shorthand notation $\phi= \phi_{q_i,q'_j}$.
Lemma~\ref{lem:exchange} 
will be used in   the proofs of both statements of Theorem \ref{thm}. 
In particular, it will   play   a crucial role  in  Section~\ref{sec:bounded}
for the   case of a bounded number of lengths. 
Indeed, in order to count the  number of   lengths for a square set of points ${\mathcal E}_N$, we  consider several rectangular subsets of   points in  ${\mathcal E}_{N}$,  i.e., several  levels of points in $E_N$, with 
the points of $E_{q_i,q'_j}$ corresponding  to the first level. Further levels  of  points  will then be inserted  or removed.
Note that Lemma~\ref{lem:exchange} provides a  case where  there are only 4 possible  lengths.

\begin{lemma}\label{lem:exchange}
Let $\alpha, \beta $ be real  numbers in $(0,1)$ such that
$1,\alpha,\beta$ are rationally independent.  Let
 $(q_i)_{i \geq 1}$, $(q'_j)_{j \geq 1}$  stand for the denominators of  their principal convergents.
We assume that  for some $i, j \ge 1$ $$  q'_j = b' q_i +1$$ 
for some positive integer $b'$.
Let
$$  \phi (n \alpha+ m \beta) : = 
\begin{cases}  
 | n +q_{i-1} |_{q_i}  \,  \alpha + |m - b' q'_{j-1}|_{q'_j} \, \beta     ,  &\text{ if $i$, $j$ are odd}, \\
 | n +q_{i-1} |_{q_i}  \,  \alpha + |m + b' q'_{j-1}|_{q'_j} \, \beta    ,  &\text{ if $i$ is odd, $j$ is even}, \\
 | n - q_{i-1} |_{q_i}  \,  \alpha + |m - b' q'_{j-1}|_{q'_j} \, \beta   ,  &\text{ if $i$ is even, $j$ is odd}, \\
 | n - q_{i-1} |_{q_i}  \,  \alpha + |m + b' q'_{j-1}|_{q'_j} \, \beta   ,  &\text{ if $i$, $j$ are even}.
\end{cases}
$$
Then $\phi$ is a  permutation of  $E_{q_i,q'_j} (\alpha, \beta)$.

Under the further  assumption that
\begin{equation} \label{eq:assumption}
\| q'_j \beta\| <  \| q_{i-1}\alpha \| - b'\| q'_{j-1}\beta \| , \end{equation} 
then the maps $\Phi$ and $\phi$ coincide, that is, the point  $\Phi( n\alpha + m\beta)$  that is located immediately after $ n\alpha + m\beta$ clockwise on the unit circle, for $0 \leq n < q_i$, $0 \leq m < q'_j$,
  is $\phi(n\alpha + m \beta)$.
Moreover, the distance (counted clockwise) $\Delta(n,m)$ between $n\alpha+ m\beta$
 and  $\phi (n \alpha+ m \beta)$, for $0 \leq n < q_i$, $0 \leq m < q'_j$,  takes one of the following values
\begin{align*}
&\| q_{i-1}\alpha \| - b'\| q'_{j-1}\beta \| ,&  &\| q_{i-1}\alpha \| - b'\| q'_{j-1}\beta \|  - \| q'_j \beta\|,  \\
&\| q_{i-1}\alpha \| - b'\| q'_{j-1}\beta \| + \| q_i \alpha\|, & &\| q_{i-1}\alpha \| - b'\| q'_{j-1}\beta \| + \| q_i \alpha\| - \| q'_j \beta\|.
\end{align*}

More precisely,  if  e.g. $i,j$  are odd, then    $\Delta(n,m)$ equals:
\begin{align*}
&\| q_{i-1}\alpha \| - b'\| q'_{j-1}\beta \|,  
 &&0 \le n < q_i - q_{i-1}, \, b' q'_{j-1} \le m < q'_j, \\
&\| q_{i-1}\alpha \| - b'\| q'_{j-1}\beta \| - \| q'_j \beta\|,  
 &&0 \le n < q_i - q_{i-1}, \, 0 \le m < b' q'_{j-1}, \\
&\| q_{i-1}\alpha \| - b'\| q'_{j-1}\beta \| + \| q_i\alpha\|,  
 &&q_i - q_{i-1} \le n < q_i, \,  b' q'_{j-1} \le m < q'_j, \\
&\| q_{i-1}\alpha \| - b'\| q'_{j-1}\beta \| + \| q_i \alpha\| - \| q'_j \beta\|, 
 &&q_i - q_{i-1} \le n < q_i, \, 0 \le m < b' q'_{j-1},
\end{align*}
and  if  e.g. $i,j$  are even, then    $\Delta(n,m)$ equals:
\begin{align*} 
&\| q_{i-1}\alpha \| - b'\| q'_{j-1}\beta \|,  
 && q_{i-1} \le n < q_i, \, 0 \le m < q'_j - b' q'_{j-1}, \\
&\| q_{i-1}\alpha \| - b'\| q'_{j-1}\beta \| - \| q'_j \beta\|,  
 && q_{i-1} \le n < q_i, \, q'_j - b' q'_{j-1} \le m < q'_j, \\
&\| q_{i-1}\alpha \| - b'\| q'_{j-1}\beta \| + \| q_i\alpha\|,  
 && 0 \le n < q_{i-1}, \, 0 \le m < q'_j - b' q'_{j-1}, \\
&\| q_{i-1}\alpha \| - b'\| q'_{j-1}\beta \| + \| q_i \alpha\| - \| q'_j \beta\|, \!\!
 && 0 \le n < q_{i-1}, \, q'_j - b' q'_{j-1} \le m < q'_j.
\end{align*} 
Similar formulas hold for the other cases.\end{lemma}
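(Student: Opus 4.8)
The plan is to prove the three assertions of the statement in order: that $\phi$ is a permutation of $E_{q_i,q'_j}(\alpha,\beta)$, then the piecewise description of the clockwise displacement $x\mapsto\{\phi(x)-x\}$ on four rectangular blocks of $\mathcal E_{q_i,q'_j}$, and finally the identification of $\phi$ with the neighbor map $\Phi$ together with the resulting list of distances. The first assertion is essentially free: since $q'_j=b'q_i+1$ the integers $q_i$ and $q'_j$ are coprime, and moreover $\gcd(q_{i-1},q_i)=1$ while $b'q'_{j-1}$ is coprime to $q'_j$, with $0<q_{i-1}<q_i$ and $0<b'q'_{j-1}<q'_j$. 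Hence the map $\varphi_{q_i,q'_j}$ of Section~\ref{sec:firstlevel}, taken with $r=-(-1)^iq_{i-1}$ and $r'=(-1)^jb'q'_{j-1}$, is a cyclic permutation of $\mathcal E_{q_i,q'_j}$, and by rational independence of $1,\alpha,\beta$ the associated map $\phi_{q_i,q'_j}$ on $E_{q_i,q'_j}(\alpha,\beta)$ defined in \eqref{eq:varphi} — which, once the four parities of $(i,j)$ are spelled out, is exactly the displayed $\phi$ — is a permutation; it is fixed-point free since $q_iq'_j\ge2$.

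Next I would compute the displacement explicitly. Fix $(n,m)\in\mathcal E_{q_i,q'_j}$ and set $x=n\alpha+m\beta$. Writing $|n+r|_q-n\in\{r,\,r-q\}$ according to whether $n$ triggers a wrap-around modulo $q$, one sees that $\phi(x)-x$ is, modulo $1$, an explicit $\mathbb Z$-linear combination of $\alpha$ and $\beta$ which depends only on which of the four rectangles of $\mathcal E_{q_i,q'_j}$ — cut out by the wrap-around thresholds for $n$ (modulo $q_i$) and for $m$ (modulo $q'_j$) — contains $(n,m)$. Reducing modulo $1$ via the standard sign relations $q_k\alpha\equiv(-1)^k\|q_k\alpha\|$ and $q'_\ell\beta\equiv(-1)^\ell\|q'_\ell\beta\|$, one obtains on these four rectangles exactly the four announced values; doing this for, say, $i,j$ odd reproduces verbatim the four regions displayed in the statement, and the remaining parity cases are identical up to signs. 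Assumption~\eqref{eq:assumption}, together with the elementary bound $\|q_{i-1}\alpha\|+\|q_i\alpha\|<1$ coming from \eqref{eq:frac}, shows that each of the four candidate values lies in $(0,1)$, and hence is the genuine clockwise distance $\{\phi(x)-x\}$.

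To pass from $\phi$ to $\Phi$, set $g(n,m):=\{\phi(x)-x\}\in(0,1)$. Since $\Phi(x)$ is the first point of $E_{q_i,q'_j}$ met clockwise from $x$, while $\phi(x)$ is some point of that set other than $x$ at clockwise distance $g(n,m)<1$, we have $\Delta(n,m)\le g(n,m)$ for every $(n,m)$. Summing over all $q_iq'_j$ pairs and using $\sum_{(n,m)}\Delta(n,m)=1$ gives $1\le\sum_{(n,m)}g(n,m)$. Evaluating the right-hand side by weighting each of the four values of the previous step by the cardinality of its rectangle, the coefficient of $\|q_{i-1}\alpha\|-b'\|q'_{j-1}\beta\|$ comes out $q_iq'_j$, that of $\|q_i\alpha\|$ comes out $q_{i-1}q'_j$, and that of $-\|q'_j\beta\|$ comes out $q_ib'q'_{j-1}$, so
$$\sum_{(n,m)}g(n,m)=q'_j\bigl(q_i\|q_{i-1}\alpha\|+q_{i-1}\|q_i\alpha\|\bigr)-b'q_i\bigl(q'_j\|q'_{j-1}\beta\|+q'_{j-1}\|q'_j\beta\|\bigr)=q'_j-b'q_i=1,$$
using \eqref{eq:frac} for $\alpha$, then for $\beta$, and then $q'_j=b'q_i+1$. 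Hence $\sum_{(n,m)}\Delta(n,m)=\sum_{(n,m)}g(n,m)$ while $\Delta(n,m)\le g(n,m)$ termwise, which forces $\Delta(n,m)=g(n,m)$, and therefore $\Phi(x)=\phi(x)$, for every $(n,m)$. The distance formulas and the region descriptions then follow at once from the previous step.

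The only genuinely load-bearing point is this exact evaluation $\sum_{(n,m)}g(n,m)=1$: it has to hold on the nose, and it does so precisely because the two instances of \eqref{eq:frac} combine with the relation $q'_j=b'q_i+1$. Everything else — keeping track of the signs in the modulo-$1$ reductions across the four parity cases, checking that $b'q'_{j-1}<q'_j$ so that the set-up of Section~\ref{sec:firstlevel} applies, and verifying that all four candidate distances remain in $(0,1)$ — is routine bookkeeping.
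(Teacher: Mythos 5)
Your proposal is correct and follows essentially the same route as the paper: the same case-by-case computation of the displacement $\phi(x)-x$ on the four wrap-around rectangles, the same sign reductions by parity, and the same key identity $\sum_{(n,m)}g(n,m)=q'_j-b'q_i=1$ via two applications of \eqref{eq:frac}. Your termwise comparison $\Delta(n,m)\le g(n,m)$ combined with the equality of sums is just a slightly more explicit phrasing of the paper's ``the intervals $(x,\phi(x))$ have total length $1$ and hence never overlap'' argument.
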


\begin{proof}
Recall  that  $\textrm{gcd}  (b'q'_{j-1}, q'_j) =1$.

We  first assume that $i$, $j$ are odd. 
Then,  $q_{i-1}\alpha - p_{i-1} = \|q_{i-1} \alpha \|$, $q'_{j-1}\beta - p'_{j-1} = \| q'_{j-1} \beta \|$ and  
$ q_{i} \alpha - p_i = -\| q_{i} \alpha \| $, $q'_{j} \beta - p'_j = - \| q'_{j} \beta \| $. Therefore,
$
q_{i-1} \alpha -b'  q'_{j-1} \beta 
=  \| q_{i-1}\alpha \| - b'\| q'_{j-1}\beta \|  +\left( p_{i-1} - b' p'_{j-1} \right).$
It follows that \begin{align*}
&\phi(n \alpha+ m \beta) - (n \alpha+ m \beta)  \\
&= |n+q_{i-1} |_{q_i} \,  \alpha +  |m-b'q'_{j-1}|_{q'_j}  \, \beta  - (n \alpha+ m \beta) \\
&= \begin{cases}
q_{i-1} \alpha -b'  q'_{j-1} \beta,  
 & 0 \le n < q_i - q_{i-1}, \ b'q'_{j-1} \le m < q'_j, \\
q_{i-1} \alpha - \left( b'  q'_{j-1} - q'_j\right) \beta,  
 & 0 \le n < q_i - q_{i-1}, \ 0 \le m < b'q'_{j-1}, \\
\left( q_{i-1} -q_i\right) \alpha -b'  q'_{j-1} \beta,  
 &  q_i - q_{i-1} \le n < q_i, \ b'q'_{j-1} \le m < q'_j, \\
\left( q_{i-1} -q_i\right) \alpha - \left( b'  q'_{j-1} - q'_j\right) \beta,  
 & q_i - q_{i-1} \le n < q_i, \ 0 \le m < b'q'_{j-1}.
\end{cases}\end{align*}
Let us assume that Equation~\eqref{eq:assumption} holds.
Let  $\widetilde{\Delta}(n,m)$ stand for  the distance (counted clockwise)  between $n\alpha+ m\beta$
 and  $\phi (n \alpha+ m \beta)$, for $0 \leq n < q_i$, $0 \leq m < q'_j$. 
Denote $D : = \| q_{i-1}\alpha \| - b'\| q'_{j-1}\beta \|$.  One has 
\begin{equation*}
\widetilde{\Delta}(n,m) = 
\begin{cases}
D,
 &0 \le n < q_i - q_{i-1}, \ b'q'_{j-1} \le m < q'_j, \\
D 
- \| q'_j \beta\|,  
 &0 \le n < q_i - q_{i-1}, \ 0 \le m < b'q'_{j-1}, \\
D 
+ \| q_i\alpha\|,   
 & q_i - q_{i-1} \le n < q_i, \  b'q'_{j-1} \le m < q'_j, \\
D 
+ \| q_i \alpha\| - \| q'_j \beta\|, 
 &q_i - q_{i-1} \le n < q_i, \ 0 \le m < b'q'_{j-1}.
\end{cases}\end{equation*}
By  Equation~\eqref{eq:assumption}, for all four cases, the values of  the right hand side are  positive and less than 1.
\medskip

We now assume that  $i$ is odd and $j$ is even. Then, 
$q_{i-1} \alpha + b' q'_{j-1} \beta = \Delta +\left( bp_{i-1} + b' p'_{j-1} \right).$ 
Similarly, we deduce that 
\begin{equation*}
\widetilde{\Delta}(n,m) 
= 
\begin{cases}
D , 
 &0 \le n < q_i - q_{i-1}, \, 0 \le m < q'_j - b'  q'_{j-1}  , \\
D 
- \| q'_j \beta\|,  
 &0 \le n < q_i - q_{i-1}, \,  q'_j - b'  q'_{j-1}  \le m < q'_j, \\
D 
+ \| q_i\alpha\| ,   
 &q_i - q_{i-1} \le n < q_i,\, 0 \le m < q'_j - b'  q'_{j-1} , \\
D 
+ \| q_i \alpha\| - \| q'_j \beta\|, \!
 &q_i - q_{i-1} \le n < q_i, \, q'_j - b'  q'_{j-1}  \le m < q'_j.
\end{cases}\end{equation*}

If $i$ is even and $j$ is odd, then 
\begin{equation*}
\widetilde{\Delta}(n,m) 
= 
\begin{cases}
D , 
 &q_{i-1} \le n < q_i,\ b'  q'_{j-1} \le m < q'_j, \\
D 
- \| q'_j \beta\|,  
 &q_{i-1} \le n < q_i,\ 0 \le m < b'  q'_{j-1}, \\
D 
+ \| q_i\alpha\| ,  
 &0 \le n < q_{i-1}, \ b'  q'_{j-1} \le m < q'_j, \\
D 
+ \| q_i \alpha\| - \| q'_j \beta\|, 
 &0 \le n < q_{i-1}, \ 0 \le m < b'  q'_{j-1}.
\end{cases}\end{equation*}
Lastly, if $i,j$ are even, then 
\begin{equation*}
\widetilde{\Delta}(n,m) 
= 
\begin{cases}
D , 
 &q_{i-1} \le n < q_i,\  0 \le m < q'_j -b'  q'_{j-1} , \\
D 
- \| q'_j \beta\|,  
 &q_{i-1} \le n < q_i ,\  q'_j -b'  q'_{j-1} \le m < q'_j, \\
D 
+ \| q_i\alpha\|,   
 &0 \le n < q_{i-1}, \ 0 \le m < q'_j -b'  q'_{j-1}, \\
D 
+ \| q_i \alpha\| - \| q'_j \beta\|, 
 &0 \le n < q_{i-1}, \ q'_j -b'  q'_{j-1} \le m < q'_j.
\end{cases}\end{equation*}
Hence, we conclude that, for the four cases  obtained  by considering  the parity of $i,j$,  one gets
\begin{align*}
&\# \left \{ (n,m) : \widetilde{ \Delta}(n,m) = D 
 \right\} = (q_i -q_{i-1})(q'_j - b'  q'_{j-1}) , \\
&\# \left \{ (n,m) :  \widetilde{\Delta}(n,m) = D 
-  \| q'_j \beta\| \right \} = (q_i -q_{i-1}) b'  q'_{j-1} , \\
&\# \left \{ (n,m) :  \widetilde{\Delta}(n,m) = D 
 +  \| q_i\alpha\| \right \} =  q_{i-1} ( q'_j -b'  q'_{j-1}), \\
&\# \left \{ (n,m) :  \widetilde{\Delta}(n,m) = D 
 + \| q_i \alpha\| - \| q'_j \beta\| \right \} =  q_{i-1} b'  q'_{j-1} .
\end{align*}

We now show the map $\phi$ sends a point to its neighbor point in the clockwise direction, that is, $\phi$ and $\Phi$ coincide.
It is sufficient to notice that the $q_i q'_j$ intervals $\big((n \alpha + m \beta), \phi (n \alpha+ m \beta) \big)$ of ${\mathbb T}$  never overlap. 
Indeed, the sum of their lengths $\widetilde{\Delta}(n,m)$ equals $1$, as  shown below
using \eqref{eq:frac}:
\begin{align*}
1 &=q'_j - b' q_i \\
&= \left(q_i \| q_{i-1} \alpha\| + q_{i-1} \| q_i \alpha \| ) q'_j - b' (q'_j \|q'_{j-1} \beta\| + q'_{j-1} \|q'_j \beta\| \right) q_i \\
&= q_i  q'_j \left(\| q_{i-1}\alpha \| - b'\| q'_{j-1}\beta \| \right) + q_{i-1} q'_j \|q_i \alpha\| - b' q'_{j-1} q_i  \|q'_j \beta \|\\
&= \sum_{(m,n)\in {\mathcal E}_{q_i,q'_j}} \left(  \phi (n \alpha+ m \beta) - (n \alpha + m \beta )  \right ).
\end{align*}
\end{proof}

\begin{remark}

The map  $ \varphi_{q_i,q'_j}$   (associated  with $\phi$ through
 \eqref{eq:varphi})   is   an exchange of  4 rectangles on $  {\mathcal E}_{q_i,q'_j} $.  
For  an illustration, see Figure \ref{fig:squarepointsbis} below.\end{remark}

\begin{figure}
\begin{center}
\begin{tikzpicture}[every loop/.style={}]
  \draw [thick, <->] (2,0) -- (0,0) -- (0,8);
  \draw (.3,0)--(.3,7);
  \draw (0,4.5)--(1.4,4.5);
  \draw (1.4,0)--(1.4,7)--(0,7);

  \node [below=4pt] at (0.4,0) {$q_{i-1}$};   
  \node [below=4pt] at (1.4,0) {$q_i$};  
  \node [below=4pt] at (2,0) {$n$};  
  \node [left=4pt] at (0,7) {$q'_j$};  
  \node [left=4pt] at (0,8) {$m$};  
  
   \draw[<->] (1.6,4.5) to (1.6,7);
  \node [right=4pt] at (1.6,5.75) {$b' q'_{j-1}$}; 
  
   \node at (3.3,4) {$\Longrightarrow$};
  \node at (3.3,4.5) {$\varphi_{q_i,q'_j}$};
  \draw [thick, <->] (6.6,0) -- (4.6,0) -- (4.6,8);
  \draw (5.7,0)--(5.7,7);
  \draw (4.6,2.5)--(6,2.5);
  \draw (6,0)--(6,7)--(4.6,7);

  \node [below=4pt] at (6,0) {$q_i$};  
 \node [below=4pt] at (6.6,0) {$n$};  
 \node [left=4pt] at (4.8,2.5) {$b'q'_{j-1}$};
  \node [left=4pt] at (4.6,7) {$q'_j$};   
  \node [left=4pt] at (4.6,8) {$m$};  
  
  \draw[<->] (5.7,7.2) to (6,7.2);
  \node [above=4pt] at (5.85,7.2) {$q_{i-1}$}; 
\end{tikzpicture}
\end{center}
\caption{The action  of $\varphi_{q_i,q'_j}$ on ${\mathcal E}_{q_i,q'_j} (\alpha,\beta)$ is  an exchange  of 4 sub-rectangles (here,  $i$ and $j$  are assumed to be  even).}
\label{fig:squarepointsbis}

\end{figure}
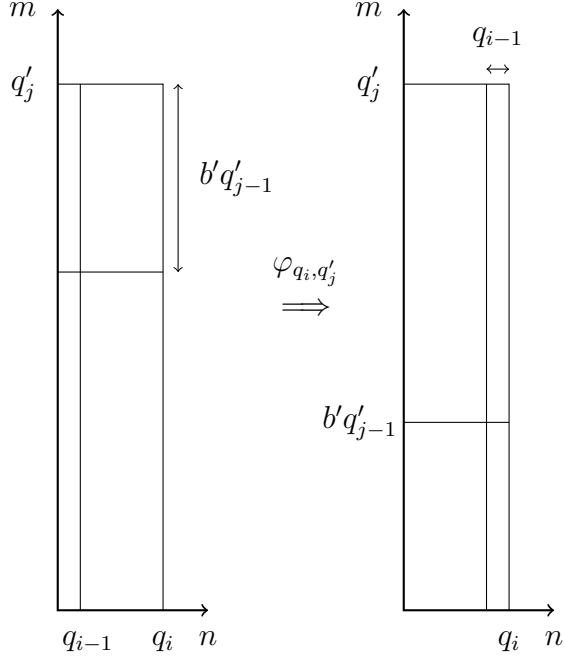

\begin{remark}\label{rem:primitive}
According to \cite{CGVZ:2002}, a distance in $\Delta(E_{q,q'} ( \alpha, \beta))$  is said to be  {\em primitive} if  it  is not a sum of shorter lengths (not necessarily distinct).
It is proved in  the same paper  \cite{CGVZ:2002} that there are at  most 4 primitive lengths for $E_{q,q'} ( \alpha, \beta)$. The  lengths given in  Lemma \ref{lem:exchange} are  primitive ones 
(with the assumption   that $1, \alpha$, $\beta$ are rationally independent).
\end{remark}

\section{Bounded  number of lengths}\label{sec:bounded}
This section is devoted to the proof of Statement (i) of Theorem \ref{thm}. 
We    provide a strategy for constructing examples of  pairs $(\alpha,\beta)$ providing a low number of distances 
$\Delta( E_{N} (\alpha, \beta))$, for all $N$.
We will rely on Lemma~\ref{lem:exchange}, and  use 
the existence of   positive  integers $b$ such that $q_i= bq'_j+1$,  as well as   the existence of  positive  integers $b'$  such that   
 $q'_i= b 'q_j+1$ for suitable $i,j$,   with 
$\alpha$ and $\beta$   playing  a symmetrical role.

\subsection*{Construction of the sequences of  convergents $(q_k)_k$ and $(q'_k)_k$.} 

We provide a construction of  sequences  of convergents $(q_k)_k$, $(q'_k)_k$, and  sequences 
$(b_k)_k,$  $(b'_k)_k$ such that  the following holds, for all $k \geq 1$:
\begin{equation}\label{31}
q'_k = b'_k q_k + 1, \qquad  q_{k+1} = b_{k+1} q'_k +1.
\end{equation}

Recall that  $q_{-1}=q'_{-1}=0$ and $q_0=q'_0=1$. We   then start with $q_1 = 3$, $q'_1 = (q_1)^3 + 1 =  28$ with $b'_1 =9$. Also $a_1=q_1=3$, $a'_1=q'_1=28$.
Let
$$ a_2 = ((q_1)^6 + q_0 - 1) b'_1 + (q_1)^5= 3^8 +  3^{5}, \qquad q_2 = 3^9 +3^6+ 1= 3^6 q'_1 + 1.$$
We  set  $b_2 = 3^6 = (q_1)^6 + q_0 - 1$. 
 
Assume now  that for some  index $k$,   one has $q'_k = b'_k q_k +1$. 
Choose $a_{k+1} = \left( (q_k)^6 + q_{k-1} -1 \right) b'_k + (q_k)^5$.
Then, we get
\begin{align*}
q_{k+1} &= a_{k+1} q_k + q_{k-1} 
= \left( (q_k)^6 + q_{k-1} -1 \right) b'_k q_k + (q_k)^6 + q_{k-1} \\
&= \left( (q_k)^6 + q_{k-1} -1 \right) \left(b'_k q_k + 1 \right) +1 
= \left( (q_k)^6 + q_{k-1} -1 \right) q'_k +1 .
\end{align*}
Let  $b_{k+1} = (q_k)^6 + q_{k-1} -1$. Then $q_{k+1} = b_{k+1} q'_k +1$.
Next, we set $a'_{k+1} = \left( (q'_k)^6 + q'_{k-1} -1 \right) b_{k+1} + (q'_k)^5$.
Then, we have similarly
\begin{align*}
q'_{k+1} &= a'_{k+1} q'_k + q'_{k-1} =\left( (q'_k)^6 + q'_{k-1} -1 \right) b_{k+1} q'_k + (q'_k)^6  + q'_{k-1} \\
&= \left( (q'_k)^6 + q'_{k-1} -1 \right) \left(b_{k+1} q'_k + 1 \right) +1 
= \left( (q'_k)^6 + q'_{k-1} -1 \right) q_{k+1} +1 
\end{align*}
and $b'_{k+1} = (q'_k)^6 + q'_{k-1} -1$.

In summary,  we inductively construct sequences $(q_k)_k$, $(q'_k)_k$ satisfying for any $k \ge 1$
\begin{equation}\label{eq:ab} \begin{split}
a_{k+1} &= \left( (q_k)^6 + q_{k-1} -1 \right) b'_k + (q_k)^5, \\
a'_{k+1} &= \left( (q'_k)^6 + q'_{k-1} -1 \right) b_{k+1} + (q'_k)^5,
\end{split}
\end{equation}
\begin{equation}\label{eq:ab2}
b_{k+1} = (q_k)^6 + q_{k-1} -1,  \qquad  b'_{k+1} = (q'_k)^6 + q'_{k-1} -1. 
\end{equation}
Then, for any $k \ge 1$, \eqref{31}  holds.
Note that 
\begin{equation}\label{32}
q_{k+1} = b_{k+1} q'_k +1
= b_{k+1} (b'_k q_k +1) +1 = b'_k ( b_{k+1} q_k) + ( b_{k+1} + 1).
\end{equation}

Since 
\begin{align*}
a_{k+1} &= \frac{q_{k+1}-q_{k-1}}{q_k} = \frac{b_{k+1} q'_k +1-q_{k-1}}{q_k} \\
&= \frac{ ( (q_k)^6 + q_{k-1} -1 ) q'_k +1-q_{k-1}}{q_k} \ge (q_k)^5 q'_k,
\end{align*}
we have
$$
q_{k-1} \| q_k \alpha \|< \frac{1}{a_{k+1}a_k} <  \frac{1}{2q'_k} < \| q'_{k-1} \beta \|
< \| q'_{k-1} \beta \| + q'_{k-1} \| q'_k \beta \|.
$$
Therefore, it follows that  
\begin{equation}\label{33}\begin{split}
\| q_{k-1} \alpha \| - b'_k \| q'_{k-1} \beta \| 
&>  \frac{1 - q_{k-1} \| q_k \alpha \|}{q_k} - \frac{b'_k}{q'_k} \\
&= \frac{q'_{k} - b'_k q_k}{q_k q'_k} - \frac{q_{k-1}}{q_k} \| q_k \alpha \|  
= \frac{1}{q_k q'_{k}}   - \frac{q_{k-1}}{q_{k}} \| q_k \alpha \| \\
&= \frac{q_{k+1} \| q_k \alpha \| + q_k \| q_{k+1} \alpha \|}{q_k q'_{k}} - \frac{q_{k-1}}{q_k} \| q_k \alpha \|  \\
&= \left( \frac{b_{k+1}}{q_k} + \frac{1}{q_k q'_{k}}  - \frac{q_{k-1}}{q_k} \right) \| q_k \alpha \| + \frac{\| q_{k+1} \alpha \|}{q'_{k}} \\
&= \left( (q_k)^5 - \frac{1}{q_k}  + \frac{1}{q_k q'_{k}}  \right)  \| q_k \alpha \|+ \frac{\| q_{k+1} \alpha \|}{q'_{k}} >0.
\end{split}\end{equation}

We also claim that 
\begin{equation}\label{eqq}
q_k < b'_k < (q_k)^3, \qquad   q'_k < b_{k+1} < (q'_k)^3.
\end{equation}
Indeed, if  $q_k < b'_k < (q_k)^3$, then,  using \eqref{eq:ab2},  we have
$$q'_k = b'_k q_k +1 < q_k^4 +1 < b_{k+1} < (q_k^2 +1)^3 < (b'_k q_k +1)^3 =  (q'_k)^3.$$
The choice of   $q_1,q'_1,b_1$  with $q_1 < b'_1 < q_1^3$    concludes the proof of the  claim.

\subsection*{Rational independence of $1,\alpha,\beta$} 

Suppose that $1,\alpha,\beta$ are rationally dependent.
Then, there exist integers $n_0, n_1, n_2$ satisfying
$n_0 + n_1 \alpha + n_2 \beta = 0$. Since $\alpha, \beta$ are both
irrational numbers, one has
$n_1, n_2 \ne 0$.
Then, for large $k$ such that 
$$|n_1 | <  \frac{q_{k+1}}{q'_k} \ \text{ and } \ 
|n_2| <  \frac{b'_{k+1}}{2}  <  b'_{k+1} q_{k+1} \| q_k \alpha\| < q'_{k+1} \| q_k \alpha \|,   $$ we have
$$
\| n_1 q'_{k} \alpha \| = \| n_2 q'_{k} \beta \| \le |n_2| \|q'_{k} \beta \| < \frac {|n_2|}{q'_{k+1}} < \| q_k \alpha\|.$$
This is a contradiction to the fact that $\| n \alpha \| > \| q_k \alpha\|$ for any $1 \le n < q_{k+1}$ (see for instance \cite[Chapter 1, Theorem 6]{Lang}).

Let us check now that $(\alpha, \beta)$  is not badly approximable.
 Recall that an irrational vector $(\alpha,\beta)$ is said to be badly approximable if there exists $C >0$ such that 
$$ \| n\alpha + m\beta \| > \frac{C}{|(n,m)|^2}$$
for any non-zero pair of integers $(n,m)$.
For the example constructed in this section, if $(n,m) = (q_k,0)$, then,  by \eqref{32}, one gets
$$
 \| q_k \alpha + 0 \beta \| = \| q_k \alpha \| < \frac{1}{q_{k+1}} < \frac{1}{b'_k b_{k+1} q_k} < \frac{1}{(q_k)^7} = \frac{1}{(q_k)^5} \frac{1}{|(q_k,0)|^2} .
$$
Therefore, $(\alpha,\beta)$ is not badly approximable.

\subsection*{Organization of the proof} 
We  first assume   $q_k < N \le q'_k$ and $k$ is even, and provide all the details  for this case.
The case $k$ odd and   then,  the  case $q_k < N \le q'_k$, will be   briefly discussed   at the end of the proof.

We  thus  assume   $q_k < N \le q'_k$ and $k$ is even (see Figure \ref{fig:org}). Note that $q'_{k} = b'_{k}q_k +1 > q_k$. The  proof will be divided into three steps.
\begin{itemize}
\item 
We first  describe    the lengths in  $E_{q_{k},q'_k}(\alpha,\beta)$. 
There are 4 lengths according to Lemma  \ref{lem:exchange}.
\item
Then, we will     deduce the description of the lengths in   $E_{q_k,N}(\alpha,\beta)$
 from the  description  of the lengths in  $E_{q_{k},q'_k}(\alpha,\beta)$. We reduce the set  of  points $(n,m)$ under consideration in this step.  Dynamically, this will
 correspond  to   induce the map  $\phi_{q_k,q'_k}$ (or  similarly the map  $\varphi_{q_k,q'_k}$).  We will go from 4 lengths to 6 lengths.
 \item Lastly,  the description of the lengths in   $E_{N}(\alpha,\beta)$ will be deduced 
 from the  description  of the lengths in  $E_{q_{k},N}(\alpha,\beta)$ by performing  an `exduction' step with  points $(n,m)$   being  inserted, creating a seventh length. 

 \end{itemize}
 \begin{figure} 
\begin{center}
\begin{tikzpicture}[every loop/.style={}]
  \draw [thick, <->] (5.3,0) -- (0,0) -- (0,8);
  \draw (4.5,0)--(4.5,4.5); 
  \draw (0,4.5)--(4.5,4.5);
  \draw (1.4,0)--(1.4,7)--(0,7);
    \node [below=4pt] at (1.4,0) {$q_k$};  
      \node [below=4pt] at (5.3,0) {$n$};  
        \node [left=4pt] at (0,7) {$q'_k$};  
  \node [left=4pt] at (0,8) {$m$};  
   \node [left=4pt] at (0,4.5) {$N$};
     \node [below=4pt] at (4.5,0) {$N$};
\end{tikzpicture}
\end{center}
 \caption{The sets ${\mathcal E}_{q_{k},q'_k}$,  ${\mathcal E}_{q_{k},N}$ and ${\mathcal E}_{N}$.}
\label{fig:org}
\end{figure}
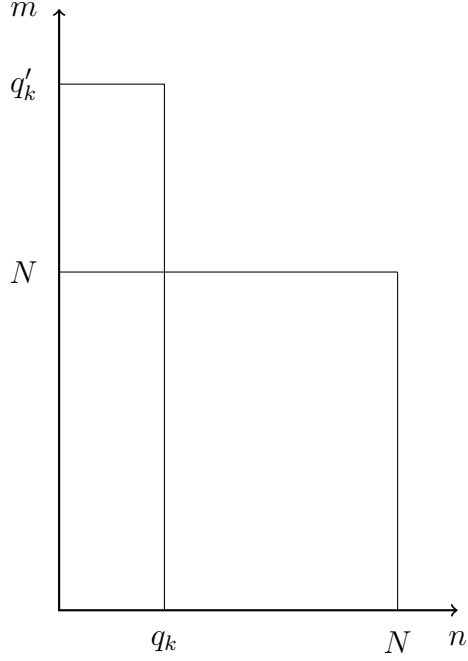

\subsection*{Distribution of the points of  $E_{q_k,q'_k}(\alpha,\beta)$} 

We apply Lemma~\ref{lem:exchange} for $E_{q_k,q'_k}(\alpha,\beta)$
by using the  fact   that $ q'_k=   b'_k q_k +1$.   With the notation of the lemma,
$i=j=k$, $b=b'_k$.  We are in the case $i,j$ even, since $k$ is even. 
Observe  that   Assumption \eqref{eq:assumption} holds, namely
$\| q'_k \beta\| <  \| q_{k-1}\alpha \| - b_k'\| q'_{k-1}\beta \| $. 
This comes from \eqref{33}, applied twice to get the  two left inequalities below:
\begin{equation*}
\| q_{k-1} \alpha \| - b'_k \| q'_{k-1} \beta \| 
>  \| q_k \alpha \| > b'_{k+1} \| q'_k \beta \| \ge  \| q'_k \beta \|.
\end{equation*}

By Lemma  \ref{lem:exchange}, the neighbor map  $ \Phi_{q_k,q'_k}$
 on $ E_{q_k,q'_k}(\alpha,\beta)$ satisfies
$$ \Phi_{q_k,q'_k}
 :   n\alpha + m\beta \ \mapsto   \   \left| n - q_{k-1}\right|_{q_k}  \,  \alpha + \left|m + b'_k q'_{k-1}\right|_{q'_k} \, \beta, $$
  and
\begin{multline*}
\Delta_{q_k,q'_k} (n,m)  = \\
\begin{cases}
\| q_{k-1} \alpha \| - b'_k \| q'_{k-1} \beta \| ,  &q_{k-1} \le n, \   m < q'_k - b'_k q'_{k-1}, \\
\| q_{k-1} \alpha \| - b'_k \| q'_{k-1} \beta \| - \| q'_{k} \beta\|,   &q_{k-1} \le n, \ q'_k - b'_k q'_{k-1} \le m, \\ 
\| q_{k-1} \alpha \| - b'_k \| q'_{k-1} \beta \| + \| q_{k} \alpha\| ,  & n < q_{k-1}, \  m < q'_k - b'_k q'_{k-1}, \\
\| q_{k-1} \alpha \| - b'_k \| q'_{k-1} \beta \| + \| q_{k} \alpha\| -  \| q'_{k} \beta \|, & n < q_{k-1}, \  q'_k - b'_k q'_{k-1} \le m.
\end{cases}\end{multline*}
Let $ \varphi_{q_k,q'_k} $ be the map  defined on   ${\mathcal E}_{q_k,q'_k}$    as 
 $\varphi_{q_k,q'_k} (n,m) = ( | n - q_{k-1} |_{q_k}, | m + b'_k q'_{k-1} |_{q'_k} )$. Its action 
  is shown in Figure~\ref{fig:squarepoints2} (on the left) as an exchange of  4   sub-rectangles.
  Recall that  $\varphi_{q_k,q'_k}$ and $\Phi_{q_k,q'_k}$ are related by    \eqref{eq:varphi}.

\subsection*{Distribution of the points of $E_{q_k,N}(\alpha,\beta)$ for  $q_{k}  <  N  \leq  q'_{k}$} 
We obtain $\Phi_{q_k,N}$ on $E_{q_k,N}(\alpha,\beta)$  by iterating  the map $\Phi_{q_k,q'_k}$. Indeed, 
since  $E_{q_k,N}(\alpha,\beta)$ is a subset of  $E_{q_k,q'_k}(\alpha,\beta)$,
the neighbor map  $\Phi_{q_k,N}$ on $E_{q_k,N}(\alpha,\beta)$ is the induced map of $\Phi_{q_k,q'_k}$ on $E_{q_k,N}(\alpha,\beta)$, i.e.,
$$\Phi_{q_k,N} (x) = (\Phi_{q_k,q'_k})^{\tau(x)} (x),$$ 
where 
$\tau(x) = \min \{ \ell \ge 1 : (\Phi_{q_k,q'_k})^\ell (x) \in E_{q_k,N}(\alpha,\beta) \}$
is the first return  time to $ E_{q_k,N}(\alpha,\beta)$ of the map $\Phi_{q_k,q'_k}$. This  induction  step will create  two more sub-rectangles, that is,  $\varphi_{q_k,q'_k} $
acts as  an exchange of 4 sub-rectangles,  while $\varphi_{q_k,N} $
acts as an exchange of 6 sub-rectangles (see Figure~\ref{fig:squarepoints2}).

The following lemma   expresses the fact that the return time $\tau$ takes 3 values. Note that the statement  below does not  depend on   the parity of  $k$.

\begin{lemma}\label{lem:3G}  Let  $\tau$
be the first return  time to $ E_{q_k,N}(\alpha,\beta)$ of the map $\Phi_{q_k,q'_k}$.  There
 exist  $\tau_1, \tau_2, \tau_3$, $N_1,N_2,N_3$  such that, 
for each $n\alpha + m\beta \in E_{q_k,N}(\alpha,\beta)$,  we have 
\begin{equation*}
\tau (n\alpha+m\beta) = 
\begin{cases}
\tau_1 ,  & \mbox{ if }  \ 0  \le m < N_1, \\
\tau_1 + \tau_2,  & \mbox{ if } \  N_1 \le m < N_2, \\
\tau_2,  & \mbox{ if } \ N_2 \le m < N.
\end{cases}
\end{equation*}
Moreover, there exist $d_1,d_2$ nonnegative integers such that 
\begin{align*} 
[0, N_1) + \tau_1 b'_k q'_{k-1} &= [N-N_1, N) + d_1 q'_{k}, \\
[N_1, N_2) + (\tau_1+\tau_2) b'_k q'_{k-1} &= [N-N_2, N-N_1) + (d_1 +d_2) q'_{k}, \\
[N_2, N) + \tau_2 b'_k q'_{k-1} &= [0, N-N_2) + d_2 q'_{k}.
\end{align*}

\end{lemma}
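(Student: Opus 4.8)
The plan is to analyze directly how the orbit of $\Phi_{q_k,q'_k}$ visits the sub-strip $0 \le m < N$ inside the full strip $0 \le m < q'_k$. Recall from Lemma~\ref{lem:exchange} that $\Phi_{q_k,q'_k}$ acts on the $m$-coordinate as the cyclic rotation $m \mapsto |m + b'_k q'_{k-1}|_{q'_k}$ on $\mathbb{Z}/q'_k\mathbb{Z}$ (with $b'_k q'_{k-1}$ coprime to $q'_k$), while the $n$-coordinate evolves independently as $n \mapsto |n - q_{k-1}|_{q_k}$. Hence the return time $\tau(n\alpha + m\beta)$ to $E_{q_k,N}$ depends only on $m$: it is exactly the first-return time of the rotation by $b'_k q'_{k-1}$ on $\mathbb{Z}/q'_k\mathbb{Z}$ to the interval $\{0,1,\dots,N-1\}$, starting from $m$. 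This reduces everything to a one-dimensional statement about return times of a single rotation to an interval.

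First I would invoke the classical three-distance theorem for a single rotation, in the ``dual'' form describing return times (this is the Slater/gap form): for a rotation by a fixed element to an arc of the circle $\mathbb{Z}/q'_k\mathbb{Z}$, the first-return time takes at most three values, and when it takes three values, the largest is the sum of the other two; moreover the interval $\{0,\dots,N-1\}$ splits into (at most three) consecutive sub-intervals $[0,N_1)$, $[N_1,N_2)$, $[N_2,N)$ on which the return time is constant, equal respectively to $\tau_1$, $\tau_1+\tau_2$, $\tau_2$ for suitable positive integers $\tau_1,\tau_2$. This gives the displayed case distinction for $\tau$ verbatim; the labeling $\tau_1,\tau_1+\tau_2,\tau_2$ with the middle block carrying the sum is precisely the standard structure. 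One small point to verify is that we are genuinely in the three-value (not two-value) generic case, or else allow degenerate cases by permitting some of $N_1,N_2,N_3$ to coincide; stating it with ``there exist $\tau_1,\tau_2,\tau_3,N_1,N_2,N_3$'' leaves room for this, and the degenerate subcases are handled by the same argument.

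Next I would establish the three displayed affine identities relating the sub-intervals. These are just the explicit description of where each block lands after its return trip: after $\tau_1$ steps of the rotation, a point with $m$-coordinate in $[0,N_1)$ has moved by $\tau_1 b'_k q'_{k-1}$ on the integer line and re-entered $[0,N)$; by the three-distance combinatorics the block $[0,N_1)$ is mapped bijectively, preserving order, onto the block $[N-N_1,N)$, so $[0,N_1) + \tau_1 b'_k q'_{k-1} \equiv [N-N_1,N) \pmod{q'_k}$, and writing out the integer multiple of $q'_k$ absorbed gives the constant $d_1$. The identity for the block $[N_2,N)$ with shift $\tau_2 b'_k q'_{k-1}$ landing on $[0,N-N_2)$ is symmetric, yielding $d_2$; and the middle block $[N_1,N_2)$, which makes $\tau_1 + \tau_2$ steps, lands on $[N-N_2, N-N_1)$ with the accumulated shift $(d_1+d_2)q'_k$ — the fact that the offsets add is forced by consistency, since the three image blocks $[N-N_1,N)$, $[N-N_2,N-N_1)$, $[0,N-N_2)$ tile $[0,N)$ in the same cyclic order. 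That $d_1,d_2 \ge 0$ follows because $b'_k q'_{k-1} \le q'_k$ and the shifts are positive, so each wrap-around contributes a nonnegative multiple; I would make this quantitative using $\tau_i \le N$ and $b'_k q'_{k-1} < q'_k$.

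The main obstacle I anticipate is purely bookkeeping: pinning down that the order-preserving bijections on blocks are exactly as claimed and that the additive relation $(d_1+d_2)$ for the middle block is forced rather than merely plausible. This is where one must be careful to use that the rotation is by $b'_k q'_{k-1}$ with $\gcd(b'_k q'_{k-1}, q'_k) = 1$, so that the return-time partition is genuinely the three-distance partition of $\mathbb{Z}/q'_k\mathbb{Z}$ by the $N$ points of the orbit, and to track the parity conventions of Lemma~\ref{lem:exchange} so the signs come out right. Everything else is an application of the one-dimensional three-distance theorem to the auxiliary rotation, translated back through the relation \eqref{eq:varphi} between $\varphi_{q_k,q'_k}$ and $\Phi_{q_k,q'_k}$.
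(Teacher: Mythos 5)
Your proposal is correct and follows essentially the same route as the paper: both reduce the return time to the first-return time of the rotation $m \mapsto |m + b'_k q'_{k-1}|_{q'_k}$ to the window $[0,N)$ (using that the $n$-coordinate always stays in $[0,q_k)$, so membership in $E_{q_k,N}$ depends only on $m$), and then invoke the discrete three-gap theorem in Slater's return-time form, with the middle identity obtained by adding the other two. The paper's proof is just a terser version of yours, citing the discrete three-gap problem for both the case distinction and the affine identities with the constants $d_1,d_2$.
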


\begin{proof} 
We prove  the lemma  for $k$ even, but  the  same argument works for  $k$ odd.
Recall that, for even $k$,
$\Phi_{q_k,q'_k} ( n\alpha + m\beta) =   | n -  q_{k-1} |_{q_k}  \,  \alpha + |m + b'_k q'_{k-1} |_{q'_k} \, \beta$.
Thus 
$$(\Phi_{q_k,q'_k})^\ell ( n\alpha + m\beta) \in   E_{q_k,N}(\alpha,\beta)\  \text{  if and only if } \ 
0 \le  | m + \ell  b'_kq'_{k-1} |_{q'_k} < N.$$

Let 
$$\bar \tau(m) = \min \{ \ell \ge 1 \, : \, 0 \le \, \left| m + \ell b'_kq'_{k-1} \right|_{q'_k} < N \}.$$
The discrete version of the three-gap problem (see  e.g. \cite{Slater:1967})  applied to the translation
by $b'_kq'_{k-1}$ modulo $N$ provides  the existence
of $\tau_1, \tau_2, \tau_3$, $N_1,N_2,N_3$ such that  \begin{equation*}
\bar \tau (m) = 
\begin{cases}
\tau_1 ,  & \mbox{ if }  0  \le m < N_1, \\
\tau_1 + \tau_2,  & \mbox{ if }  N_1 \le m < N_2, \\
\tau_2,  & \mbox{ if }  N_2 \le m < N,
\end{cases}
\end{equation*}
as well as the existence of
nonnegative integers $d_1, d_2$ satisfying 
\begin{align*}
[0, N_1) + \tau_1 b'_k q'_{k-1} &= [N-N_1, N) + d_1 q'_{k}, \\
[N_2, N) + \tau_2 b'_k q'_{k-1} &= [0, N-N_2) + d_2 q'_{k}.
\end{align*}
Clearly, we have
\begin{equation*}
[N_1, N_2) + (\tau_1+\tau_2) b'_k q'_{k-1} = [N-N_2, N-N_1) + (d_1 +d_2) q'_{k}.
\end{equation*}
Lemma~\ref{lem:3G} is  thus a direct consequence of the discrete three-gap problem.
\end{proof}

Therefore, for  $k$ even, we  deduce from Lemma~\ref{lem:3G} that
\begin{multline}\label{qkn}
\Phi_{q_k,N} (n\alpha+m\beta) = (\Phi_{q_k,q'_k})^{\tau(n\alpha+m\beta)} (n\alpha+m\beta) \\
=\begin{cases} 
\left | n - \tau_1 q_{k-1} \right |_{q_k} \alpha +  \left( m + \tau_1 b'_k q'_{k-1} - d_1 q'_{k} \right) \beta,
  &  0 \le m < N_1, \\
\left | n - (\tau_1+\tau_2) q_{k-1} \right |_{q_k} \alpha  \\
\quad +  \left( m + (\tau_1+\tau_2) b'_k q'_{k-1} - (d_1+d_2) q'_{k}  \right) \beta,
&   N_1 \le m < N_2, \\
\left | n - \tau_2 q_{k-1} \right|_{q_k} \alpha + \left( m + \tau_2 b'_k q'_{k-1} - d_2 q'_{k}  \right)  \beta,
  &  N_2 \le m < N.
\end{cases} 
\end{multline}
Let $h_1, h_2, h_3$ be nonnegative integers satisfying
$$ \tau_1 q_{k-1} = h_1 q_k + r_1, \quad  \tau_2 q_{k-1} = h_2 q_k + r_2, \quad 
(\tau_1 + \tau_2) q_{k-1} = h_3 q_k + r_3 $$ 
with $ 0 \le r_1, r_2, r_3 < q_k$.
Each of the three cases splits into two cases  according to  the fact  that $n$   is smaller  or  not than $r_i$, for  $i=1,2,3$.
Then, we have 
$$
\Delta_{q_k,N} (n,m) =
\begin{cases}
\Delta_1 +  \| q_{k} \alpha \|, &\text{ if } 0 \le n < r_1,\  0 \le m < N_1, \\
\Delta_1,  &\text{ if } r_1 \le n < q_k, \  0 \le m < N_1,\\
\Delta_3 +  \| q_{k} \alpha \|, &\text{ if }   0  \le n < r_3, \ N_1 \le m < N_2,\\
\Delta_3, &\text{ if } r_3 \le n < q_k, \ N_1 \le m < N_2,\\
\Delta_2 +  \| q_{k} \alpha \|,  &\text{ if } 0 \le n < r_2, \  N_2 \le m < N, \\
\Delta_2, &\text{ if } r_2 \le n < q_k, \  N_2 \le m < N,
\end{cases}
$$
where
\begin{align*}
\Delta_1 &= \tau_1 (\| q_{k-1}\alpha \| - b'_k \| q'_{k-1} \beta \|) - d_1 \| q'_{k} \beta \| + h_1 \| q_k \alpha\| , \\
\Delta_2 &= \tau_2 (\| q_{k-1}\alpha \| - b'_k \| q'_{k-1} \beta \|) - d_2 \| q'_{k} \beta \| + h_2 \| q_k \alpha \|, \\
\Delta_3 &= (\tau_1 + \tau_2) (\| q_{k-1}\alpha \| - b'_k \| q'_{k-1} \beta \|) - (d_1 +d_2) \| q'_{k} \beta \| + h_3 \| q_k \alpha \|.
\end{align*}
Indeed,  in the case where,  for example,   $0 \le m < N _1$, we have,  by \eqref{qkn},
\begin{align*}
&\Phi_{q_k,N} (n\alpha+m\beta) - (n\alpha + m\beta) \\
&\quad =  \left | n - \tau_1 q_{k-1} \right |_{q_k} \alpha +  \left( m + \tau_1 b'_k q'_{k-1} - d_1 q'_{k} \right) \beta - (n\alpha + m\beta) \\
&\quad = \left(  \left | n - r_1\right |_{q_k} - n \right) \alpha +  \left(\tau_1 b'_k q'_{k-1} - d_1 q'_{k} \right) \beta  \\
&\quad = \begin{cases} 
\left( - r_1 + q_k \right) \alpha +  \left(\tau_1 b'_k q'_{k-1} - d_1 q'_{k} \right) \beta , & \mbox{  if } n < r_1, \\
 - r_1 \alpha +  \left(\tau_1 b'_k q'_{k-1} - d_1 q'_{k} \right) \beta  , &  \mbox{  if }  n \ge r_1,
\end{cases} 
\end{align*}
and
\begin{multline*}
 - r_1 \alpha +  \left(\tau_1 b'_k q'_{k-1} - d_1 q'_{k} \right) \beta 
 = (h_1 q_k - \tau_1 q_{k-1} )\alpha + \left(\tau_1 b'_k q'_{k-1} - d_1 q'_{k} \right) \beta \\
=  \tau_1 (\| q_{k-1}\alpha \| - b'_k \| q'_{k-1} \beta \|) - d_1 \| q'_{k} \beta \| + h_1 \| q_k \alpha\| =\Delta_1. 
\end{multline*} 
The action of $\varphi_{q_k,N} (n,m) = ( \varphi_{q_k,q'_k} )^{ \tau (m)} (n,m) $ on ${\mathcal E}_{q_k,N}$
  is illustrated in Figure~\ref{fig:squarepoints2}.

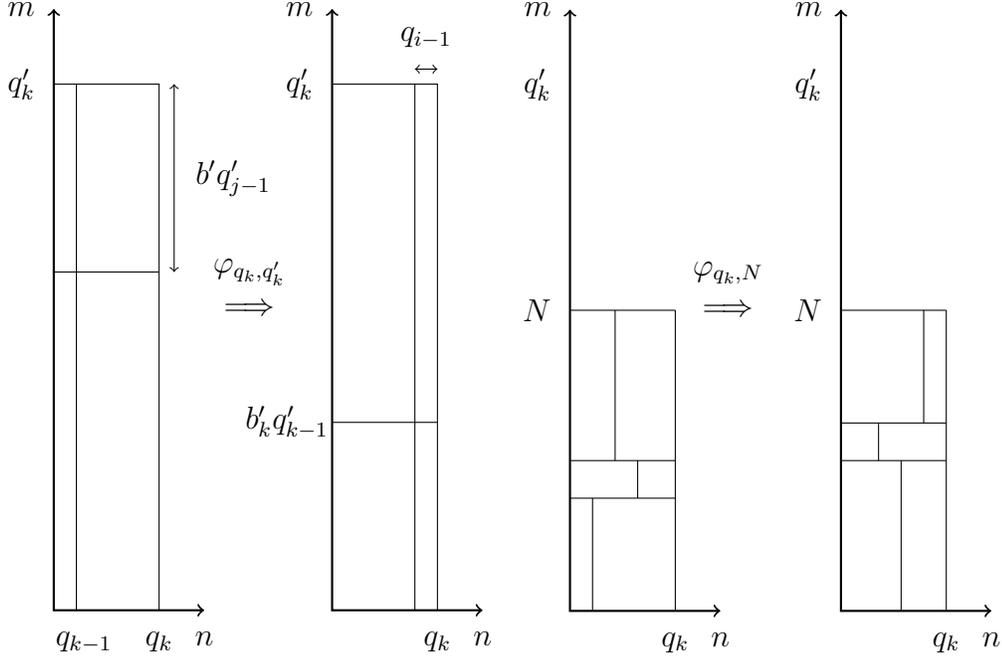
\begin{figure}
\begin{center}
\begin{tikzpicture}[every loop/.style={}]
  \draw [thick, <->] (2,0) -- (0,0) -- (0,8);
  \draw (.3,0)--(.3,7);
  \draw (0,4.5)--(1.4,4.5);
  \draw (1.4,0)--(1.4,7)--(0,7);
  \node [below=4pt] at (0.4,0) {$q_{k-1}$};   
  \node [below=4pt] at (1.4,0) {$q_k$};  
  \node [below=4pt] at (2,0) {$n$};  
  \node [left=3pt] at (0,7) {$q'_k$};  
  \node [left=3pt] at (0,8) {$m$};  
  \draw[<->] (1.6,4.5) to (1.6,7);
  \node [right=4pt] at (1.6,5.75) {$b' q'_{j-1}$}; 
  \node at (2.6,4) {$\Longrightarrow$};
  \node at (2.6,4.5) {$\varphi_{q_k,q'_k}$};
  \draw [thick, <->] (5.7,0) -- (3.7,0) -- (3.7,8);
  \draw (4.8,0)--(4.8,7);
  \draw (3.7,2.5)--(5.1,2.5);
  \draw (5.1,0)--(5.1,7)--(3.7,7);
  \node [below=4pt] at (5.1,0) {$q_k$};  
 \node [below=4pt] at (5.7,0) {$n$};  
 \node [left=3pt] at (3.9,2.5) {$b'_kq'_{k-1}$};
  \node [left=3pt] at (3.7,7) {$q'_k$};  
  \node [left=3pt] at (3.7,8) {$m$};  
  \draw[<->] (4.8,7.2) to (5.1,7.2);
  \node [above=4pt] at (4.95,7.2) {$q_{i-1}$}; 
\end{tikzpicture} 
\begin{tikzpicture}[every loop/.style={}]
  \draw [thick, <->] (2,0) -- (0,0) -- (0,8);
  \draw (1.4,0)--(1.4,4);
  \draw (0,4)--(1.4,4);
  \draw (0,1.5)--(1.4,1.5);
  \draw (0,2)--(1.4,2);
  \draw (.3,0)--(.3,1.5);
  \draw (.6,2)--(.6,4);
  \draw (.9,1.5)--(.9,2);
  \node [below=4pt] at (1.4,0) {$q_k$};  
  \node [below=4pt] at (2,0) {$n$};  
  \node [left=3pt] at (0,7) {$q'_k$};  
  \node [left=3pt] at (0,8) {$m$};  
  \node [left=3pt] at (0,4) {$N$};
  \node at (2.1,4) {$\Longrightarrow$};
  \node at (2.1,4.5) {$\varphi_{q_k,N}$};
\end{tikzpicture}
\begin{tikzpicture}[every loop/.style={}]
  \draw [thick, <->] (2,0) -- (0,0) -- (0,8);
  \draw (0,2.5)--(1.4,2.5);
  \draw (1.4,0)--(1.4,4);
  \draw (0,4)--(1.4,4);
  \draw (0,2)--(1.4,2);
  \draw (1.1,2.5)--(1.1,4);
  \draw (.8,0)--(.8,2);
  \draw (.5,2)--(.5,2.5);
  \node [below=4pt] at (1.4,0) {$q_k$};  
  \node [below=4pt] at (2,0) {$n$};  
  \node [left=3pt] at (0,7) {$q'_k$};  
  \node [left=3pt] at (0,8) {$m$};  
  \node [left=3pt] at (0,4) {$N$};
\end{tikzpicture} 
\end{center}
\caption{The action  of $\varphi_{q_k,q'_k}$ on ${\mathcal E}_{q_k,q'_k} (\alpha,\beta)$ is  an exchange  of 4 sub-rectangles (left),
and the action  of $\varphi_{q_k,N}$ on ${\mathcal E}_{q_k,N} (\alpha,\beta)$ is an exchange of  6 sub-rectangles (right).}
\label{fig:squarepoints2}
\end{figure}

\subsection*{From $E_{q_k,N}(\alpha,\beta)$ to $E_{N}(\alpha,\beta)$} 
Let $N = a q_k + R$,  with   $a \geq 1$ and $1 \le R \le q_k$
(recall that $q_k <  N \le q'_{k}$). 
Since $E_{q_k,N}$ is a subset of  $E_{q_k, q'_k}$, we have 
$$\min \Delta \left(E_{q_k,N} 
 \right) \ge 
 \min \Delta \left( E_{q_k, q'_k} 
  \right)= \| q_{k-1} \alpha\| - b'_k \| q'_{k-1}\beta \| - \| q'_{k} \beta\|. $$
Using \eqref{33} and \eqref{eqq},
it follows that
\begin{equation}\label{lowerbound}
\begin{split}
\min \Delta \left(E_{q_k,N} 
 \right)
&\ge  \| q_{k-1} \alpha\| - b'_k \| q'_{k-1}\beta \| - \| q'_{k} \beta\|  \\
& 
>  \Big( (q_k)^5 -\frac{1}{q_k} \Big)\| q_k \alpha\|  - \frac{\| q_k \alpha\| }{b'_{k+1}} 
> ( (q_k)^3 +1 ) \| q_k \alpha \| \\
&>\frac{b'_{k}q_k + 1}{q_k} \| q_k \alpha \| 
= \frac{ q'_{k} }{q_k} \| q_k \alpha \|
\ge \frac{N}{q_k} \| q_k \alpha \| >  a \| q_k \alpha \| .
\end{split}\end{equation}

We claim that
\begin{equation}\label{lem:extension}
\Phi_{N}(n \alpha + m\beta) = \begin{cases}
(n + q_k) \alpha + m \beta, & \text{ if } 0  \le n < N - q_k, \\ 
\Phi_{q_k,N} \left(  \left| n \right|_{q_k} \alpha + m \beta \right), & \text{ if } N - q_k \le n < N.
\end{cases}
\end{equation}

\begin{proof}[Proof of  Claim \eqref{lem:extension}]
If $0 \le n < R=N-aq_k$, then  the points
$(n + q_k) \alpha + m \beta$, $( n + 2q_k) \alpha + m \beta$, \dots,  $( n + a q_k) \alpha + m\beta$ are located between $n\alpha + m \beta$ and $\Phi_{q_k,N} (n\alpha + m \beta)$, as shown in  Figure~\ref{fig1}.
Therefore, we have $\Delta_{N}(n + cq_k,m) = \| q_k \alpha \|$, for $0 \le c \le a-1$, and 
$\Delta_{N}(n + a q_k,m) = \Delta_{q_k,N}(n,m) - a \| q_k \alpha \|$, which is positive by \eqref{lowerbound}.

If $R \le n < q_k$, then the points  $(n+q_k)\alpha + m \beta$, $(n+ 2q_k)\alpha + m\beta$, \dots,  $(n+ (a-1)q_k)\alpha + m\beta$ are located  between $n\alpha + m \beta$ and $\Phi_{q_k,N} (n\alpha + m \beta)$. 
In this case, the gaps between two adjacent points of $E_{N}(\alpha,\beta)$ are 
given by $\Delta_{N} (n + cq_k,m) = \| q_k \alpha \|$, for $0 \le c \le a-2$, and 
$\Delta_{N}(n + (a-1) q_k,m) = \Delta_{q_k,N}(n,m) - (a-1) \| q_k \alpha \|$, which is positive by \eqref{lowerbound}.
\end{proof}

\begin{figure}
\begin{center}
\begin{tikzpicture}[every loop/.style={}]
 \node at (0,0) {$\bullet$} node [below=5pt] at (0,0) {$n\alpha + m \beta$};
 \node at (1.5,0) {$\bullet$} node [above=5pt] at (1.5,0) {$(n + q_k)\alpha + m \beta$};
 \node at (3,0) {$\bullet$} node [below=5pt] at (3,0) {$(n + 2q_k)\alpha + m \beta$};
 \node at (6.7,0) {$\bullet$} node [below=5pt] at (4.9,0) {$\cdots$} node [below=5pt] at (6.7,0) {$(n + a q_k)\alpha + m \beta$};
 \node at (11,0) {$\bullet$}  node [below=5pt] at (11,0) {$\Phi_{q_k,N} (n\alpha + m \beta)$}; 
 \draw  (-.5,0) -- (11.5,0);
\end{tikzpicture} 
\end{center}
\caption{Illustration of the  proof of Claim \eqref{lem:extension}, when $0 \le n < R$.}
\label{fig1}
\end{figure}
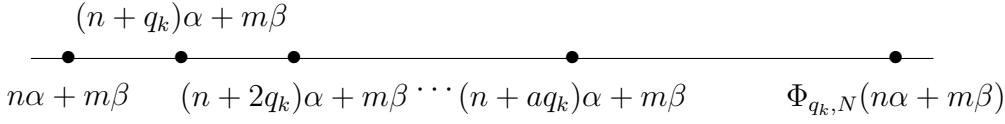

Therefore, using \eqref{qkn}, we deduce 
that
\begin{multline*}
\Phi_{N} (n\alpha+m\beta) \\
=  \left\{
\begin{aligned}
&(n + q_k)\alpha + m\beta, && 0 \le n < N- q_k, \\
&\left | n - \tau_1 q_{k-1} \right |_{q_k} \alpha+ \left( m + \tau_1 b'_k q'_{k-1} - d_1 q'_{k} \right) \beta, \! && n \ge N- q_k, 0 \le m < N_1, \\
& \left | n - (\tau_1+\tau_2) q_{k-1} \right |_{q_k} \alpha  \\
&\ + \left( m + (\tau_1+\tau_2) b'_k q'_{k-1} - (d_1+d_2) q'_{k}  \right) \beta, \!  && n \ge N- q_k, N_1 \le m < N_2, \\
& \left | n - \tau_2 q_{k-1} \right|_{q_k} \alpha + \left( m + \tau_2 b'_k q'_{k-1} - d_2 q'_{k}  \right)  \beta, \! && n \ge N- q_k, N_2 \le m < N.
\end{aligned}
\right.
\end{multline*}
Let $\bar h_1, \bar h_2, \bar h_3$ be nonnegative integers satisfying
$$ N- \tau_1 q_{k-1} = \bar h_1 q_k + \bar r_1, \  
N - \tau_2 q_{k-1} = \bar h_2 q_k + \bar r_2, \ 
N - (\tau_1 + \tau_2) q_{k-1} = \bar h_3 q_k + \bar r_3 $$
 with $ 0 \le\bar r_1,\bar r_2, \bar r_3 < q_k. $
Then, we have 
$$\Delta_{N} (n,m) =
\begin{cases}
\| q_k \alpha \| , & 0 \le n < N-q_k, \\
\Delta_1 +  \| q_{k} \alpha \|, & N-q_k \le n < N- \bar r_1,\  0 \le m < N_1, \\
\Delta_1,  & N- \bar r_1 \le n < N, \  0 \le m < N_1,\\
\Delta_3 +  \| q_{k} \alpha \|, & N-q_k \le n < N - r_3, \ N_1 \le m < N_2,\\
\Delta_3, & N - \bar r_3 \le n < q_k, \ N_1 \le m < N_2,\\
\Delta_2 +  \| q_{k} \alpha \|,  & N - q_k \le n < N - \bar r_2, \  N_2 \le m < N, \\
\Delta_2, & N - \bar r_2 \le n < N, \  N_2 \le m < N,
\end{cases}
$$
where
\begin{align*}
\Delta_1 &= \tau_1 (\| q_{k-1}\alpha \| - b'_k \| q'_{k-1} \beta \|) - d_1 \| q'_{k} \beta \| - \bar h_1 \| q_k \alpha\| , \\
\Delta_2 &= \tau_2 (\| q_{k-1}\alpha \| - b'_k \| q'_{k-1} \beta \|) - d_2 \| q'_{k} \beta \| - \bar h_2 \| q_k \alpha\|, \\
\Delta_3 &= (\tau_1 + \tau_2) (\| q_{k-1}\alpha \| - b'_k \| q'_{k-1} \beta \|) - (d_1 +d_2) \| q'_{k} \beta \| - \bar h_3\| q_k \alpha \|.
\end{align*}
 The action of $\varphi_{N} (n,m)$ on ${\mathcal E}_{N}$ is illustrated in Figure~\ref{fig:squarepoints3}.

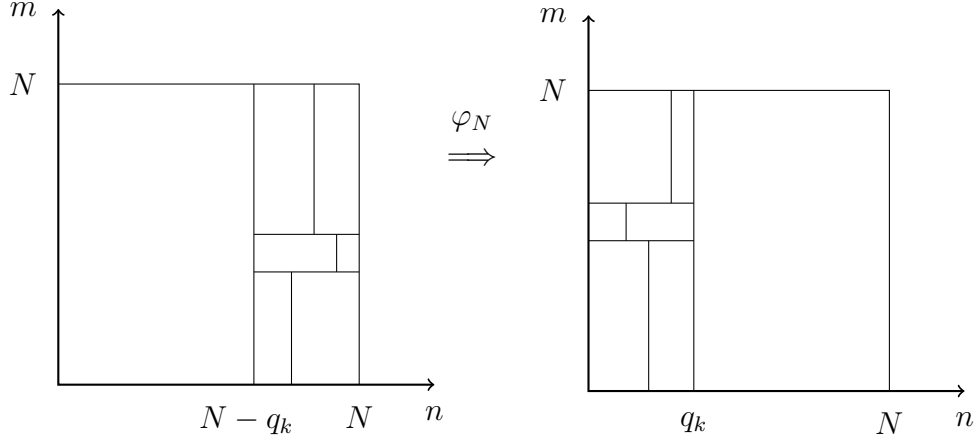
\begin{figure}
\begin{center} 
\begin{tikzpicture}[every loop/.style={}]
  \draw [thick, <->] (5,0) -- (0,0) -- (0,5);
  \draw (2.6,0)--(2.6,4);
  \draw (0,4)--(4,4)--(4,0);
  \draw (2.6,1.5)--(4,1.5);
  \draw (2.6,2)--(4,2);
  \draw (3.1,0)--(3.1,1.5);
  \draw (3.4,2)--(3.4,4);
  \draw (3.7,1.5)--(3.7,2);
  \node [below=4pt] at (5,0) {$n$};  
  \node [left=4pt] at (0,5) {$m$};  
  \node [below=4pt] at (4,0) {$N$};  
   \node [below=4pt] at (2.5,0) {$N-q_k$}; 
  \node [left=4pt] at (0,4) {$N$};
  \node at (5.5,3) {$\Longrightarrow$};
  \node at (5.5,3.5) {$\varphi_{N}$};
\end{tikzpicture} \
\begin{tikzpicture}[every loop/.style={}]
  \draw [thick, <->] (5,0) -- (0,0) -- (0,5);
  \draw (0,2.5)--(1.4,2.5);
  \draw (1.4,0)--(1.4,4);
  \draw (0,4)--(4,4)--(4,0);
  \draw (0,2)--(1.4,2);
  \draw (1.1,2.5)--(1.1,4);
  \draw (.8,0)--(.8,2);
  \draw (.5,2)--(.5,2.5);
  \node [below=4pt] at (1.4,0) {$q_k$};  
  \node [below=4pt] at (5,0) {$n$};  
  \node [left=4pt] at (0,5) {$m$};  
 \node [below=4pt] at (4,0) {$N$};  
  \node [left=4pt] at (0,4) {$N$};
\end{tikzpicture}
\end{center}
\caption{The action of  $\varphi_{N}$ on ${\mathcal E}_{N}$  is  an  exchange of  7 sub-rectangles.}
\label{fig:squarepoints3}
\end{figure}

\subsection*{End of the proof} The case $q_k < N \leq q'_{k}$ and $k$ even  has thus been handled.
In the case  $ k$   odd, $\Delta_{q_k,q'_k} (n,m)$ still  takes four values, as discussed in Lemma~\ref{lem:exchange}, namely 
\begin{align*}
&\| q_{k-1}\alpha \| - b'_k \| q_{k-1} \beta \| , &
&\| q_{k-1}\alpha \| - b'_k \| q_{k-1} \beta \| - \| q'_{k} \beta\|, \\ 
&\| q_{k-1}\alpha \| - b'_k \| q_{k-1} \beta \| + \| q_{k} \alpha\| , &
&\| q_{k-1}\alpha \| - b'_k \| q_{k-1} \beta \| + \| q_{k} \alpha\| -  \| q'_{k} \beta \|.
\end{align*}
We also have at most 6 values for
$\Delta \left(E_{q_k,N} (\alpha, \beta)\right) $ which are obtained by considering the  induced map of $\Phi_{q_k,q'_k}$.
Observe that $\Delta_{q_k,q'_k} (n,m)$ takes the  same values as  in  the case $q_k < N \leq q'_{k}$ and $k$ even. It follows
from Claim~\eqref{lem:extension}    that there are 7 values for $\Delta \left(E_{N} (\alpha, \beta)\right) $.

Lastly, the case  $q'_{k}< N \leq q_{k+1}$  is  similarly  deduced by induction  from the case    $E_{q'_k,q_{k+1}} (\alpha, \beta)$.
This ends the proof of Theorem~\ref{thm} (i).

\begin{remark}\label{rem:primitivebis}
Observe that, in  continuation of  Remark  \ref{rem:primitive},  there are  here also    4  primitive lengths.
\end{remark}

\section{Unbounded  number of lengths}\label{sec:unbounded}

This section is devoted to the proof of Statement (ii) of Theorem \ref{thm}. 
The strategy works as follows: one wants to regularly get indices  $k$  for which 
$q_k = q'_k +1$. This will imply that $q_k$ and $q'_k$ are coprime and   that they have the same size.
This will allow us   in particular  to consider mainly   the first level $E_{q_k,q'_k}(\alpha,\beta)$ (and in fact even  here  $E_{q_{4k+1},q'_{4k+1}}(\alpha,\beta))$). 
Now, we  provide a construction of $\alpha$ and $\beta$ for which  $ q_{4k-3} + 1 = q'_{4k-3}$  and $ q_{4k-2} - 1 = q'_{4k-2}$, for all $k$.
Furthermore,  we will have    $a'_{4k+1}=1$.
 
 \subsection*{Construction of the sequences of  convergents $(q_i)_i$ and $(q'_j)_j$.} 
We consider $\alpha$ and $\beta$  irrationals in $(0,1)$ with  respective sequences of partial quotients $(a_i)_i$  and $(a'_j)_j$ satisfying 
\begin{equation*}
a_1 =  2, \quad  a_2 =  2 \quad \text{ and } \quad  a'_1 =  3, \quad  a'_2 =  1.
\end{equation*}
Then we have
\begin{equation*}
q_0 =  1, \ q_1 =  2, \ q_2 =  5, \quad
q'_0 =  1, \ q'_1 =  3, \ q'_2 =  4.
\end{equation*}

We  now inductively define $(a_i)_i, (a'_j)_j$ as follows.
Suppose that
$$ q_{4k-3} + 1 = q'_{4k-3} , \qquad  q_{4k-2} -1 = q'_{4k-2}   \qquad \mbox{for } k \geq 1. $$
Furthermore, let $R_k:=   q_{4k-3} + 1 = q'_{4k-3} $ and $Q_{k}:= q_{4k-2} -1 = q'_{4k-2}.$

Let
\begin{gather*}
a_{4k-1} :=  1,  \quad
a_{4k} :=  3,   \quad
a_{4k+1} :=  2Q_k +R_k-1,  \quad
a_{4k+2} :=  6Q_k+4R_k, \\
a'_{4k-1} :=  2,\quad
a'_{4k} :=  4 Q_k+3R_k -2,\quad
a'_{4k+1} := 1, \quad
a'_{4k+2} := 6Q_k+4R_k -1.
\end{gather*}
Then we get 
\begin{align*}
q_{4k-1} &= Q_k+R_k, & q'_{4k-1} &=  2Q_k+R_k,\\
q_{4k} &=  4Q_k+3R_k+1, & q'_{4k} &=  8Q_k^2+(10R_k-3)Q_k + 3R_k^2-2R_k,
\end{align*}
and 
$$ q_{4k+1}=R_{k+1} - 1 ,  \quad  q'_{4k+1}=R_{k+1} , \quad  q_{4k+2} = Q_{k+1} +1, \quad q'_{4k+2} =Q_{k+1}, $$
where we put inductively
\begin{align*}
R_{k+1} &= 8Q_k^2+(10R_k-1)Q_k + 3R_k^2 -R_k,\\
Q_{k+1} &= 48Q_k^3 + (96R_k-6)Q_k^2 + (43R_k^2-5R_k-2)Q_k + 12R_k^3-4R_k^2 -R_k.
\end{align*}
 


\subsection*{Rational independence of $1,\alpha,\beta$}

Suppose that $1,\alpha,\beta$ are rationally dependent.
Then, there exist integers $n_0, n_1, n_2$ satisfying
$n_0 + n_1 \alpha + n_2 \beta = 0$. Since $\alpha, \beta$ are both
irrational numbers,
$n_1, n_2 \ne 0$.
Then we have
$$
\| n_1 q'_{4k+1} \alpha \| = \| n_2 q'_{4k+1} \beta \| \le |n_2| \|
q'_{4k+1} \beta \| < \frac {|n_2|}{q'_{4k+2}}< \frac {|n_2|}{a'_{4k+2}
q'_{4k+1}}.$$
Thus, there exists an integer $p$ satisfying
\begin{equation}\label{eq:indep}
\left | \alpha  - \frac{p}{n_1 q'_{4k+1} } \right | < \frac
{|n_2|}{|n_1| a'_{4k+2} (q'_{4k+1})^2}.
\end{equation}
Choose $k$ large enough for $a'_{4k+2} > 2|n_1||n_2|$ to hold.
Then, by Legendre's theorem (see e.g.  \cite[Theorem 1.8]{Bugeaud}), one gets
$\frac{p}{n_1 q'_{4k+1}} = \frac{p_s}{q_s}$ for some positive integer $s$.
Since $q_{4k+2} = q'_{4k+2} +1 > a'_{4k+2} q'_{4k+1} > |n_1| q'_{4k+1}$,  we get $s \le 4k+1$.
Also from $q'_{4k+1} = q_{4k+1}+1$,  we get $s \ne 4k+1$.
If we assume $s \le 4k$, then
$$
\left | \alpha - \frac{p_s}{q_s} \right | \ge \left | \alpha -
\frac{p_{4k}}{q_{4k}} \right | \geq 
\frac {1}{2q_{4k} q_{4k+1}} > \frac {1}{2 (q'_{4k+1})^2 } > \frac{|n_1||n_2|}{a'_{4k+2} (q'_{4k+1})^2},$$
which is a contradiction to \eqref{eq:indep}.

\subsection*{Organization of the proof} 

We will work mainly with the points   of the  first level provided by $E_{q_{4k+1},q'_{4k+1}} (\alpha, \beta)$. This will  be sufficient  to derive   infinitely many lengths
 for the  points in  $E_N$, with $N= q_{4k+1}=q'_{4k+1} -1$.  

The  study of the  first level will   be  divided into  Lemma  \ref{lem:positive}  and Proposition \ref{prop:unbounded}.  The main difficulty here is 
that   the map $\phi$ of   Lemma \ref{lem:exchange}   provides points that can  be  located either on the right, or on the  left of   a point (Assumption  \eqref{eq:assumption}
does not hold). 

\subsection*{Distribution of the points of $E_{q_{4k+1},q'_{4k+1}} (\alpha, \beta)$.} We now consider points of the  first level provided by $E_{q_{4k+1},q'_{4k+1}} (\alpha, \beta)$.
Recall that $q'_{4k+1}= q_{4k+1}+ 1$.
With the notation of  Lemma \ref{lem:exchange},  put $i = j =4k+1$.
Observe  that $b'=1$. We  consider 
$$ \delta_k : =  \| q'_{4k-1} \beta\| - \| q_{4k} \alpha \|.$$
According to Lemma \ref{lem:positive} below, one has $\delta_k>0$. 

Note that there are  more than the 4  lengths of Lemma \ref{lem:exchange} since   Assumption (\ref{eq:assumption})  is not satisfied.
Indeed, one has $ - \| q_{4k}\alpha \| + \| q'_{4k-1}\beta \|  = \delta_k >0, $ 
 which  contradicts  $\| q'_{4k+1} \beta\| <   \| q_{4k}\alpha \| -\| q'_{4k}\beta \| $, by noticing that   $\| q'_{4k-1}\beta \|= \| q'_{4k}\beta \| +\| q'_{4k+1}\beta \|$, since $a'_{4k+1}=1$.
However, even if Assumption  (\ref{eq:assumption})  is not satisfied, Lemma   \ref{lem:exchange} provides  a convenient   expression $\phi$  for the neighbor map,
that will be used in the proof of Proposition  \ref{prop:unbounded} below, showing that there are  at most 12  lengths.
 
\begin{lemma}\label{lem:positive}
One has,  for all $k$:
$$
0 < 2 \delta_k a_{4k+1}  <\| q_{4k+1} \alpha \| < \| q'_{4k+1} \beta \|.
$$
\end{lemma}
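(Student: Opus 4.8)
The plan is to establish the four-term chain of inequalities by expressing each quantity in terms of the continued-fraction data built in the construction, and then bounding. First I would handle the rightmost inequality $\|q_{4k+1}\alpha\| < \|q'_{4k+1}\beta\|$. Using \eqref{eq:frac} for $\alpha$ at level $4k+1$ gives $\|q_{4k+1}\alpha\| < 1/q_{4k+2}$, and $q_{4k+2} = Q_{k+1}+1$; on the other side, $\|q'_{4k+1}\beta\| > 1/(2q'_{4k+2}) = 1/(2Q_{k+1})$ (since the next partial quotient $a'_{4k+2}\ge 2$, one has $\|q'_{4k+1}\beta\| \ge 1/(q'_{4k+2}+q'_{4k+1}) > 1/(2q'_{4k+2})$). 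Comparing $1/(Q_{k+1}+1)$ with $1/(2Q_{k+1})$ settles it once $Q_{k+1}$ is large, which it is for all $k\ge 1$ given the explicit recursion. The leftmost inequality $\delta_k>0$ is exactly the assertion that $\|q'_{4k-1}\beta\| > \|q_{4k}\alpha\|$: I would bound $\|q_{4k}\alpha\| < 1/q_{4k+1} = 1/(R_{k+1}-1)$ from above using \eqref{eq:frac}, and $\|q'_{4k-1}\beta\|$ from below by $1/(q'_{4k}+q'_{4k-1})$, then check $q'_{4k}+q'_{4k-1} < R_{k+1}-1$ from the explicit formulas for $q'_{4k}$, $q'_{4k-1}$, $R_{k+1}$. (One sees $R_{k+1} = q'_{4k}+q'_{4k-1}$ up to lower-order terms, so the strict inequality needs a short check that the remaining terms have the right sign.)

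The genuinely delicate inequality is the middle one, $2\delta_k a_{4k+1} < \|q_{4k+1}\alpha\|$, because it asserts that the \emph{difference} $\delta_k = \|q'_{4k-1}\beta\| - \|q_{4k}\alpha\|$ is not merely positive but small — of size roughly $\|q_{4k+1}\alpha\|/a_{4k+1}$, i.e. comparable to $\|q_{4k}\alpha\|/a_{4k+1}^2$. The natural approach is to write both $\|q'_{4k-1}\beta\|$ and $\|q_{4k}\alpha\|$ with a common-looking main term. Since $a'_{4k+1}=1$ we have $\|q'_{4k-1}\beta\| = \|q'_{4k}\beta\| + \|q'_{4k+1}\beta\|$, and applying \eqref{eq:frac} to $\beta$ at level $4k+1$ gives $\|q'_{4k}\beta\| = (1 - q'_{4k}\|q'_{4k+1}\beta\|)/q'_{4k+1}$. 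Likewise \eqref{eq:frac} for $\alpha$ at level $4k+1$ (noting $a_{4k+1} = 2Q_k+R_k-1$) expresses $\|q_{4k}\alpha\|$ via $q_{4k+1}, \|q_{4k+1}\alpha\|$. Substituting and using the numerical coincidences forced by the construction ($q'_{4k+1}=q_{4k+1}+1$, and the explicit values of $R_k,Q_k$ and of the partial quotients) should collapse the leading terms, leaving $\delta_k$ expressed as an explicit combination of $\|q_{4k+1}\alpha\|$ and $\|q'_{4k+1}\beta\|$ divided by $q_{4k+1}$ (or $q'_{4k+1}$), hence of size $\approx \|q_{4k+1}\alpha\|/q_{4k+1}$. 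Since $a_{4k+1}$ is polynomially smaller than $q_{4k+1} \approx R_{k+1}$ (indeed $a_{4k+1} = 2Q_k+R_k-1$ while $q_{4k+1}$ is cubic in $Q_k$), multiplying by $2a_{4k+1}$ still leaves something below $\|q_{4k+1}\alpha\|$.

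Concretely, the order of the steps would be: (1) record the elementary facts $\|q_n\theta\| < 1/q_{n+1}$ and $\|q_n\theta\| > 1/(q_{n+1}+q_n) > 1/(2q_{n+1})$ for $\theta\in\{\alpha,\beta\}$; (2) prove $\|q_{4k+1}\alpha\| < \|q'_{4k+1}\beta\|$; (3) prove $\delta_k>0$; (4) derive a closed-form upper bound for $\delta_k$ via \eqref{eq:frac} and the substitution $a'_{4k+1}=1$, keeping track of signs; (5) compare $2\delta_k a_{4k+1}$ against the lower bound $\|q_{4k+1}\alpha\| > 1/(2q_{4k+2})$, reducing everything to a polynomial inequality in $Q_k, R_k$ that holds for all $k\ge1$ (checking the base case $k=1$ by hand from $q_0,\dots,q_2$ and $q'_0,\dots,q'_2$ if needed). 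I expect step (4) to be the main obstacle: one must choose the right bracketing so the large main terms cancel exactly rather than leaving an $O(1/q_{4k+1})$ error that could swamp the target bound after multiplication by $a_{4k+1}$; the cancellation is exactly where the artificial relations $a'_{4k+1}=1$ and $q'_{4k+1} = q_{4k+1}+1$ of the construction are used. Once the closed form for $\delta_k$ is in hand, the remaining estimates are routine monotonicity in $Q_k$ and $R_k$.
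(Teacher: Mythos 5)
Your outline has a genuine gap in steps (2) and (3): the crude convergent bounds $\tfrac{1}{q_{n+1}+q_n}<\|q_n\theta\|<\tfrac{1}{q_{n+1}}$ are provably too weak to decide either of those two inequalities, because the construction makes the competing quantities agree to first order. For step (3), since $a'_{4k+1}=1$ one has $q'_{4k}+q'_{4k-1}=q'_{4k+1}=R_{k+1}$, while $q_{4k+1}=R_{k+1}-1$; so your proposed comparison requires $q'_{4k}+q'_{4k-1}<R_{k+1}-1$, i.e.\ $R_{k+1}<R_{k+1}-1$, which is false --- the lower bound $\|q'_{4k-1}\beta\|>1/R_{k+1}$ and the upper bound $\|q_{4k}\alpha\|<1/(R_{k+1}-1)$ overlap and cannot yield $\delta_k>0$. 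For step (2) the situation is the same but you have additionally misread the direction: you would need the upper bound $1/(Q_{k+1}+1)$ on $\|q_{4k+1}\alpha\|$ to lie \emph{below} the lower bound $1/(2Q_{k+1})$ (or $1/(Q_{k+1}+R_{k+1})$) on $\|q'_{4k+1}\beta\|$, which fails for every $Q_{k+1}\ge 1$; largeness of $Q_{k+1}$ makes it worse, not better.

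Both inequalities are second-order effects that are decided by the continued-fraction \emph{tails}. The paper writes
$\|q_{n-1}\theta\| = 1/\bigl(q_n + q_{n-1}[0;a_{n+1},a_{n+2},\dots]\bigr)$
and computes that $1/\|q_{4k}\alpha\|$ and $1/\|q'_{4k-1}\beta\|$ share the common leading part $R_{k+1}-\tfrac13$, so that $\delta_k$ is governed by the difference of the correction terms $\tfrac{R_k+3-2s}{3(6Q_k+4R_k+s)}-\tfrac{R_k+s'}{3(6Q_k+4R_k+s')}$ with $s\approx\tfrac34$, $s'\approx\tfrac12$; positivity of $\delta_k$ comes from $3-2s>s'$, and the explicit sandwich $\tfrac{1}{3(6Q_k+5R_k)R_{k+1}^2}<\delta_k<\tfrac{1}{3(6Q_k+4R_k)(R_{k+1}-\frac13)^2}$ then gives $2\delta_k a_{4k+1}<\tfrac{1}{3(R_{k+1}-\frac13)^2}<\tfrac{1}{Q_{k+1}+R_{k+1}}<\|q_{4k+1}\alpha\|$; the last inequality $\|q_{4k+1}\alpha\|<\|q'_{4k+1}\beta\|$ likewise follows from comparing the exact denominators $Q_{k+1}+1+(R_{k+1}-1)s$ and $Q_{k+1}+R_{k+1}s'$. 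Your step (4) is pointed in the right direction (you correctly identify that the main terms must cancel exactly and that $a'_{4k+1}=1$ and $q'_{4k+1}=q_{4k+1}+1$ are where this happens), but it is left as a plan rather than carried out, and without that computation none of the three inequalities is actually established.
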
 

\begin{proof}

We  have 
$$
\| q_{k-1} \alpha \| = \frac{1}{q_{k}+q_{k-1}\frac{\| q_{k} \alpha \|}{\| q_{k-1} \alpha \|}} 
= \frac{1}{q_{k}+\cfrac{q_{k-1}}{ a_{k+1} + \cfrac{1}{a_{k+2} + \ddots}}}. 
$$

Hence  we get  
\begin{align*}
\| q_{4k} \alpha \|
&= \frac{1}{R_{k+1}- 1 +\frac{4Q_k+3R_k+1}{ 6Q_k+4R_k + s }} 
= \frac{1}{ R_{k+1} - \frac 13 + \frac{ R_k + 3 -2 s }{3(6Q_k+ 4R_k+s)}}, \\
\| q'_{4k-1} \beta \| 
&=\frac{1}{ R_{k+1} - 2Q_k - R_k +\frac{2Q_k+R_k}{1+ \frac{1}{6Q_k+4R_k-1+ s'}}} 
=  \frac{1}{ R_{k+1} - \frac 13 + \frac{R_k +s'}{3(6Q_k+ 4R_k+s')}},
\end{align*}
where 
\begin{equation*}
s := \frac{1}{a_{4k+3} +\dfrac{1}{a_{4k+4}+ \ddots}}, \qquad 
s'  := \frac{1}{a'_{4k+3} +\dfrac{1}{a'_{4k+4}+ \ddots}}
\end{equation*}
satisfying that 
\begin{align*}
 \frac{6Q_{k+1} + 3R_{k+1} +1 }{8Q_{k+1}  + 4R_{k+1} +1 } = \frac{3a_{4k+5}+4}{4a_{4k+5}+5} < s & < \frac{3a_{4k+5}+1}{4a_{4k+5}+1} = \frac{6Q_{k+1} + 3R_{k+1} -2}{8Q_{k+1}  + 4R_{k+1} -3}, \\
\frac{4 Q_{k+1}  + 3R_{k+1} -2}{8Q_{k+1}  + 6R_{k+1} -3} = \frac{a'_{4k+4}}{2 a'_{4k+4} + 1} < s'& < \frac{a'_{4k+4} + 1}{2 a'_{4k+4} + 3} = \frac{4 Q_{k+1}  + 3R_{k+1} -1}{8Q_{k+1}  + 6R_{k+1} -1}.
\end{align*}
Then, we have 
\begin{align*}
\delta_k  &= \frac{\frac{ R_k + 3-2s}{3(6Q_k+ 4R_k+s)} - \frac{R_k + s'}{3(6Q_k+ 4R_k+s')} }
{\Big( R_{k+1} - \frac 13 + \frac{R_k + s'}{3(6Q_k+ 4R_k+ s')} \Big)  \Big( R_{k+1} - \frac 13 + \frac{ R_k + 3-2s}{3(6Q_k+ 4R_k+ s)} \Big)}.
\end{align*}  
By elementary computation we get
\begin{equation*}
 \frac{1}{3 (6Q_k + 5R_k)R_{k+1}^2} < \delta_k <  \frac{1}{3 (6Q_k + 4R_k)(R_{k+1} - \frac 13 )^2}.
\end{equation*}

Also we have
\begin{equation*}
\| q_{4k+1} \alpha \|
= \frac{1}{ Q_{k+1} +1 + (R_{k+1} - 1) s },  \qquad
\| q'_{4k+1} \beta \|
= \frac{1}{ Q_{k+1} + R_{k+1} s'},
\end{equation*}
thus
\begin{align*}
0<2 \delta_k a_{4k+1} < \frac{1}{3(R_{k+1}-\frac13)^2} &< \frac{1}{Q_{k+1}+ R_{k+1}} \\
&<\| q_{4k+1} \alpha \| < \| q'_{4k+1} \beta \| <\frac{1}{Q_{k+1}}.
\end{align*}
\end{proof}

\begin{proposition}\label{prop:unbounded}
Let $\alpha, \beta$ given by the construction  above. 
We consider points of the  first level provided by $E_{q_{4k+1},q'_{4k+1}} (\alpha,\beta)$.
The neighbor  map  $\Phi= \Phi_{q_{4k+1},q'_{4k+1}} $    satisfies the following.
\begin{enumerate}
\item If $q_{4k} \le n < q_{4k+1}, \ q'_{4k-1} \le m < q'_{4k+1}$, then
$$\Phi(n\alpha + m\beta) = (n - q_{4k} ) \alpha + (m -q'_{4k-1}) \beta, \quad \mbox{ and }  \quad   \Delta(n,m)=\delta_k.$$

\item If  $ 0 \le n < q_{4k-1}, \ q'_{4k+1} - a_{4k+1}q'_{4k-1} \le m < q'_{4k+1}$, then
\begin{align*}
\Phi(n\alpha + m\beta) &= (n + a_{4k+1} q_{4k} ) \alpha + ( m + a_{4k+1} q'_{4k-1} - q'_{4k+1}) \beta,  \\
\Delta(n,m)&= \| q'_{4k+1} \beta \|-a_{4k+1}\delta_k .
\end{align*}
\item If  $ q_{4k-1} \le n < q_{4k}, \ q'_{4k+1} - (a_{4k+1}-1)q'_{4k-1} \le m < q'_{4k+1}$, then
\begin{align*}
\Phi(n\alpha + m\beta) &= (n + (a_{4k+1} -1) q_{4k} ) \alpha + ( m + (a_{4k+1}-1) q'_{4k-1} - q'_{4k+1}) \beta, \\
 \Delta(n,m)&=\| q'_{4k+1} \beta \|-(a_{4k+1}-1)\delta_k.
\end{align*}
\item If  $q_{4k-1} \le n < 2q_{4k-1}, \ q'_{4k+1} - a_{4k+1}q'_{4k-1} \le m < q'_{4k+1} - (a_{4k+1}-1)q'_{4k-1}$, then
\begin{align*}
\Phi(n\alpha + m\beta) &= (n + 2a_{4k+1} q_{4k} -q_{4k+1}) \alpha + ( m + 2a_{4k+1} q'_{4k-1} - q'_{4k+1}) \beta, \\  
\Delta(n,m)&= \| q_{4k+1} \alpha \| +  \| q'_{4k+1} \beta \|  - 2a_{4k+1} \delta_k.
\end{align*}
\item If  $2q_{4k-1} \le n < q_{4k}, \ q'_{4k+1} - a_{4k+1}q'_{4k-1} \le m < q'_{4k+1} - (a_{4k+1}-1)q'_{4k-1}$, then
\begin{align*}
\Phi(n\alpha + m\beta) &= (n + (2a_{4k+1} -1) q_{4k} -q_{4k+1}) \alpha \\
&\quad + ( m + (2a_{4k+1} -1)q'_{4k-1} - q'_{4k+1}) \beta, \\  
\Delta(n,m)&=  \| q_{4k+1} \alpha \| +  \| q'_{4k+1} \beta \|  - (2a_{4k+1} -1) \delta_k.
\end{align*}
\item  If $0 \le n < q_{4k-1}, \ q'_{4k+1} - (a_{4k+1}+1)q'_{4k-1} \le m < q'_{4k+1} - a_{4k+1}q'_{4k-1}$, then
\begin{align*}
\Phi(n\alpha + m\beta) &= (n + 2a_{4k+1} q_{4k} -q_{4k+1}) \alpha + ( m + 2a_{4k+1} q'_{4k-1} - q'_{4k+1}) \beta,\\
 \Delta(n,m)&=  \| q_{4k+1} \alpha \| +  \| q'_{4k+1} \beta \|  - 2a_{4k+1} \delta_k.
\end{align*}
\item If $ q_{4k-1} \le n < q_{4k}, \ q'_{4k+1}- (a_{4k+1}+1)q'_{4k-1} \le m < q'_{4k+1}- a_{4k+1}q'_{4k-1}$, then
\begin{align*}
\Phi(n\alpha + m\beta) &= (n + a_{4k+1} q_{4k} - q_{4k+1}) \alpha + ( m + a_{4k+1}q'_{4k-1} ) \beta, \\ 
 \Delta(n,m)&=  \| q_{4k+1} \alpha \|  - a_{4k+1} \delta_k.
\end{align*}
\item If $ 0 \le n < q_{4k}, \ q'_{4k+1}- (a_{4k+1}+c+1)q'_{4k-1} \le m < q'_{4k+1}- (a_{4k+1}+c) q'_{4k-1}$, where $1 \le c \le a_{4k+1}-1$, then
\begin{align*}
\Phi(n\alpha + m\beta) &= (n + (a_{4k+1} +c)q_{4k} - q_{4k+1}) \alpha + ( m + (a_{4k+1}+c)q'_{4k-1} ) \beta, \\
 \Delta(n,m)&=  \| q_{4k+1} \alpha \|  - ( a_{4k+1} +c) \delta_k.
\end{align*}
\item If $ 0 \le n < 2q_{4k-1}, \ 0 \le m < q'_{4k+1}- 2a_{4k+1} q'_{4k-1} $, then
\begin{align*}
\Phi(n\alpha + m\beta) &= (n + 2a_{4k+1} q_{4k} - q_{4k+1}) \alpha + ( m + 2a_{4k+1}q'_{4k-1}) \beta, \\
 \Delta(n,m) &=   \| q_{4k+1} \alpha \|  - 2a_{4k+1} \delta_k.
\end{align*}
\item If  $ 2q_{4k-1} \le n < q_{4k}, \ 0 \le m < q'_{4k+1}- (2a_{4k+1} -1)q'_{4k-1}$, then
\begin{align*}
\Phi(n\alpha + m\beta) &= (n + (2a_{4k+1}-1) q_{4k} - q_{4k+1}) \alpha + ( m + (2a_{4k+1}-1) q'_{4k-1} ) \beta, \\  
\Delta(n,m) &=   \| q_{4k+1} \alpha \|  - ( 2a_{4k+1} -1) \delta_k.
\end{align*}
\item If  $(c-1)q_{4k}+ 2q_{4k-1} \le n < cq_{4k} +2q_{4k-1}, \ 0 \le m < q'_{4k-1}$, where $1 \le c \le a_{4k+1}-1$, then
\begin{align*}
\Phi(n\alpha + m\beta) &= (n + (2a_{4k+1}-c) q_{4k} - q_{4k+1}) \alpha + ( m + (2a_{4k+1}-c)q'_{4k-1} ) \beta, \\ 
 \Delta(n,m) &=   \| q_{4k+1} \alpha \|  - ( 2a_{4k+1} -c) \delta_k.
\end{align*}
\item If  $(a_{4k+1}-1)q_{4k}+ 2q_{4k-1} \le n < q_{4k+1}, \ 0 \le m < q'_{4k-1} $, then
\begin{align*}
\Phi(n\alpha + m\beta) &= (n + a_{4k+1}q_{4k} - q_{4k+1}) \alpha + ( m + a_{4k+1}q'_{4k-1} ) \beta, \\ 
 \Delta(n,m)&=  \| q_{4k+1} \alpha \|  - a_{4k+1} \delta_k.
\end{align*}
\end{enumerate}

\end{proposition}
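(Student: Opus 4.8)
The plan is to track the orbit of each point $n\alpha+m\beta$ under the neighbor map by combining the convenient algebraic formula for $\phi=\phi_{q_{4k+1},q'_{4k+1}}$ supplied by Lemma~\ref{lem:exchange} with the sign information furnished by Lemma~\ref{lem:positive}. Since $b'=1$ and $i=j=4k+1$ is odd, Lemma~\ref{lem:exchange} tells us that $\phi(n\alpha+m\beta)=|n+q_{4k}|_{q_{4k+1}}\,\alpha+|m-q'_{4k}|_{q'_{4k+1}}\,\beta$, and that the displacement $\phi(x)-x$ equals one of $\pm q_{4k}\alpha$ combined with one of $\pm q'_{4k}\beta$ modulo $1$, i.e.\ (using the parity conventions) one of $\|q_{4k}\alpha\|+\|q'_{4k}\beta\|$, $\|q_{4k}\alpha\|-\|q'_{4k}\beta\|$, $-\|q_{4k}\alpha\|+\|q'_{4k}\beta\|=\delta_k$, $-\|q_{4k}\alpha\|-\|q'_{4k}\beta\|$, depending on which of the four rectangles $(n,m)$ lies in. The crucial point, and the reason why the four-length conclusion of Lemma~\ref{lem:exchange} fails, is that the third of these, $\delta_k$, is \emph{positive} by Lemma~\ref{lem:positive}: so on the rectangle $q_{4k}\le n<q_{4k+1}$, $q'_{4k-1}\le m<q'_{4k+1}$ (note $q'_{4k+1}-q'_{4k}=q'_{4k-1}$ since $a'_{4k+1}=1$) the map $\phi$ really is the clockwise neighbor, giving case (1) directly; but on the other three rectangles $\phi$ may overshoot and land on the wrong side, so $\phi$ is not itself the neighbor map there and we must iterate.

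The main body of the argument is therefore an induced-map / first-return computation, exactly in the spirit of the bounded case of Section~\ref{sec:bounded} but now with a two-directional jump. First I would establish case (1): verify that on rectangle $R_0:=[q_{4k},q_{4k+1})\times[q'_{4k-1},q'_{4k+1})$ the displacement is exactly $\delta_k\in(0,1)$ and that these $q_{4k-1}\cdot q'_{4k-1}$ intervals are pairwise disjoint; here one uses $\|q_{4k+1}\alpha\|<\|q'_{4k+1}\beta\|$ from Lemma~\ref{lem:positive} to control where the image sits. Then, starting from a point in one of the remaining rectangles, I would iterate $\phi$ and record after how many steps the orbit first re-enters a region on which the accumulated displacement is a genuine (positive, $<\min\Delta$-feasible) clockwise gap. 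Because each application of $\phi$ shifts the $n$-coordinate by $q_{4k}$ (mod $q_{4k+1}$) and the $m$-coordinate by $-q'_{4k-1}$ (mod $q'_{4k+1}$), and because $q_{4k+1}=a_{4k+1}q_{4k}+q_{4k-1}$ while $q'_{4k+1}=a_{4k+1}q'_{4k-1}+q'_{4k-3}$ with $q'_{4k-3}=R_k$ small, the return times are governed by how many copies of $q_{4k-1}$ (resp.\ $q'_{4k-1}$) fit before wraparound — this produces the multipliers $a_{4k+1}$, $a_{4k+1}-1$, $2a_{4k+1}$, $2a_{4k+1}-1$, and the running index $c$ with $1\le c\le a_{4k+1}-1$ that appear throughout cases (2)--(12). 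For each case I would (a) write the claimed image point as an explicit iterate $\phi^{\ell}(n\alpha+m\beta)$ for the stated $\ell$, (b) check the intermediate iterates fall outside the rectangle or in regions already accounted for, and (c) compute $\Delta(n,m)$ as the corresponding integer combination $a\|q_{4k}\alpha\|+b\|q'_{4k}\beta\|$ rewritten, via $\|q_{4k}\alpha\|=\|q'_{4k-1}\beta\|-\delta_k$ and $\|q'_{4k}\beta\|=\|q'_{4k-1}\beta\|-\|q'_{4k+1}\beta\|$, in the $\|q_{4k+1}\alpha\|$, $\|q'_{4k+1}\beta\|$, $\delta_k$ normal form displayed in the proposition, and verify it lies in $(0,1)$ using the chain of inequalities $0<2\delta_k a_{4k+1}<\|q_{4k+1}\alpha\|<\|q'_{4k+1}\beta\|<1/Q_{k+1}$ of Lemma~\ref{lem:positive}.

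Finally, I would close the argument by a counting/covering check analogous to the one ending the proof of Lemma~\ref{lem:exchange}: the rectangles listed in cases (1)--(12) partition $\mathcal E_{q_{4k+1},q'_{4k+1}}$ (using $q'_{4k+1}=a_{4k+1}q'_{4k-1}+R_k$ and the explicit widths), and the sum of the lengths $\Delta(n,m)$ over all $(n,m)$ telescopes to $1$ via \eqref{eq:frac}; disjointness of the half-open intervals $(x,\Phi(x))$ then forces $\Phi=\phi^{\ell}$ with $\ell$ as claimed, so no further iteration is needed and the case list is complete. The main obstacle is the bookkeeping in step (b): because $\phi$ can move a point to the left of its starting position, one has to argue carefully that the stated iterate is the \emph{first} return, i.e.\ that none of the intermediate points $\phi(x),\dots,\phi^{\ell-1}(x)$ lies clockwise-between $x$ and $\phi^{\ell}(x)$ — this is where the sharp inequalities of Lemma~\ref{lem:positive} (in particular $2a_{4k+1}\delta_k<\|q_{4k+1}\alpha\|$, which bounds how far the overshoot can accumulate) do the real work, and where the various boundary cases $c=1$ and $c=a_{4k+1}-1$ need separate attention.
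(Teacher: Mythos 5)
Your overall strategy is the paper's: take the algebraic formula for $\phi=\phi_{q_{4k+1},q'_{4k+1}}$ from Lemma~\ref{lem:exchange} (using $|m-q'_{4k}|_{q'_{4k+1}}=|m+q'_{4k-1}|_{q'_{4k+1}}$, since $a'_{4k+1}=1$), organize the twelve cases by a first-return/tower construction over the set where $\phi$ jumps clockwise, check positivity of every claimed gap via Lemma~\ref{lem:positive}, and then clinch the identification $\Phi=\overline{\Phi}$ by showing the claimed gaps sum to $1$ so that the intervals $(x,\overline{\Phi}(x))$ cannot overlap. The paper in fact relegates the first-return dynamics to ``insight'' and lets the sum-equals-one computation carry the entire burden of proof, which is also the logical structure of your closing paragraph. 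However, there are two concrete errors. First, your displacement bookkeeping is wrong: the four values of $\phi(x)-x$ are not $\pm\|q_{4k}\alpha\|\pm\|q'_{4k}\beta\|$ but (with $D:=\|q_{4k}\alpha\|-\|q'_{4k}\beta\|=\|q'_{4k+1}\beta\|-\delta_k$) the values $D$, $D-\|q'_{4k+1}\beta\|=-\delta_k$, $D+\|q_{4k+1}\alpha\|$, $D+\|q_{4k+1}\alpha\|-\|q'_{4k+1}\beta\|=\|q_{4k+1}\alpha\|-\delta_k$; in particular $\delta_k=\|q'_{4k-1}\beta\|-\|q_{4k}\alpha\|$ (not $\|q'_{4k}\beta\|-\|q_{4k}\alpha\|$, which is negative), the unique anticlockwise jump of $\phi$ is $-\delta_k$, and consequently on the rectangle of case~(1) the clockwise neighbor is $\phi^{-1}$, not $\phi$. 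Your own later identity $\|q_{4k}\alpha\|=\|q'_{4k-1}\beta\|-\delta_k$ contradicts your opening identification, so the sketch is internally inconsistent and the per-case normal forms would come out wrong if computed from your list.

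Second, and more seriously for the decisive step, the twelve regions do \emph{not} partition ${\mathcal E}_{q_{4k+1},q'_{4k+1}}$: case~(8) with $c=a_{4k+1}-1$ overlaps case~(10), and case~(10) overlaps case~(11) with $c=1$ (this is Remark~\ref{rmk:overlap}), each overlap containing $(q_{4k}-2q_{4k-1})q'_{4k-1}$ points. On the overlaps the two formulas agree, so the statement is consistent, but if you sum the gaps over the twelve cases as if they were disjoint you double-count those points and the total strictly exceeds $1$, and the disjointness argument collapses. The paper's count explicitly subtracts $2(q_{4k}-2q_{4k-1})q'_{4k-1}$ copies of the corresponding length before verifying $S=1$. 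You need to either prove the regions can be redescribed as a genuine partition or, as the paper does, correct the multiplicities in the sum; without one of these the final step does not go through.
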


\begin{remark}\label{rmk:overlap}
Observe that there are overlapped regions between  points corresponding to Case (8) and Case  (10)  (when $c=a_{4k+1}-1$), and between  Case (10) and Case  (11) (when $c=1$).
\end{remark}

\begin{proof}

Let $\overline{\Phi}$ and $\overline{\Delta}$  stand for the functions   defined  in the statement of the proposition.
We want to prove that  $\overline{\Phi}$ coincides with $\Phi$  on  $E_{q_{4k+1},q'_{4k+1}}$, and that   similarly $\overline{\Delta}$ coincides with $\Delta$.
The  proof works as  for Lemma  \ref{lem:exchange}: we will  show that the sum of the  lengths provided  by $\overline{\Delta}$  (with multiplicities)   equals  $1$.

According to Lemma \ref{lem:positive}, one checks that the lengths $\overline{\Delta}$  are all nonnegative.
The intervals  for the pairs $ (n,m) $  in Proposition \ref{prop:unbounded} are  also well-defined. Indeed,  one   checks that 
$q_{4k}- 2  q_{4k-1} >0$, by  noticing  that $a_{4k} = 3$, and  also   $q'_{4k+1} - (2 a_{4k+1} +1) q'_{4k-1}  >0$.

One has  $q'_{4k+1} =  q_{4k+1} +1$.   With the notation of Lemma \ref{lem:exchange}, one has   $b'=1$.
Since $ q'_{4k-1} = q'_{4k+1} - a'_{4k+1} q'_{4k} = q'_{4k+1} - q'_{4k}$,
we have
$$  | m - q'_{4k} |_{q'_{4k+1}} = | m + q'_{4k-1} |_{q'_{4k+1}} .$$
As in  Lemma \ref{lem:exchange},   we consider  the cyclic permutation  $\phi$ on $E_{q_{4k+1},q'_{4k+1}}(\alpha,\beta)$ defined, for all $(n,m) \in {\mathcal E} _{q_{4k+1},q'_{4k+1}} $, as: 
\begin{align*}
\phi (n \alpha + m \beta) 
&= |n+  q_{4k}|_{q_{4k+1}} \,  \alpha +  | m - {q'_{4k}}|_{q'_{4k+1}}\, \beta \\
&=  | n+ q_{4k} |_{q_{4k+1}}  \,  \alpha + |m + q'_{4k-1} |_{q'_{4k+1}} \, \beta.
\end{align*}
We   first   provide some  dynamical insight     on the way  the 12  lengths in Proposition  \ref{prop:unbounded}  have been obtained. As stressed before, Assumption \eqref{eq:assumption} is not satisfied, and   the  neighbor map $\Phi$ is not equal
to  $\phi$.    In fact,  there  are points $x$ for which  $\phi(x) $  is obtained from $x$  by  performing  a clockwise jump of $\delta_k$, but there are also  points $x$  for which  $\phi(x)$ is  located in   the  anticlockwise direction $(\phi(x)<x$), with  $x$ being the clockwise neighbor of $\phi(x)$.   However,  the map $\Phi$ can ce be recovered by  performing suitable inductions of  the map   $\phi$  on the set of points for which $ \phi(x)>x$.
Let 
$$ G := \{ n\alpha + m\beta : q_{4k+1} - q_{4k} \le n < q_{4k+1}  \text{ or }  q'_{4k+1} - q'_{4k-1} =q'_{4k} \le m < q'_{4k+1} \}.$$
One has $G \subset E_{q_{4k+1},q'_{4k+1}} (\alpha, \beta).$
Then $G$ is the set of points such that $\phi(x) >x $, that is,   $\phi (x) $  is obtained from $ x$  by  performing  a clockwise jump of $\delta_k= \| q'_{4k-1} \beta\| - \| q_{4k} \alpha \| >0.$ 
Elements  $n\alpha + m\beta$  in $\phi (G) $ are such that  $ 0 \leq  n < q_{4k}$, or  $0 \leq  m < q'_{4k-1}.$ 
This is the complement of  the set of $(n,m)$
corresponding to Case (1).
Let $F_G$  be defined on $E_{q_{4k+1},q'_{4k+1}} (\alpha, \beta)$  as the  first entering  time  of  $\phi$ to $G$, that is, 
$$F_G (n\alpha + m\beta) : = \min\{ \ell  \ge 0 \,  :  \, \phi^\ell (n\alpha + m\beta) \in G\}.$$
Also, define $S_G$ as the second entering  time  of $\phi$ to $G$:
\begin{align*}
S_G (n\alpha + m\beta) : &= \min\{ \ell \ge F_G (n\alpha + m\beta) +1 \, :\,  \phi^{\ell} (n\alpha + m\beta) \in G\}  \\
&= F_G \left ( \phi^{F_G (n\alpha + m\beta) +1} (n\alpha + m\beta) \right) +  F_G (n\alpha + m\beta).
\end{align*}
We need  to consider the second  entering time to recover an element  located in the clockwise direction.

Let us now define a map   $\widetilde{\Phi}$  on $E_{q_{4k+1},q'_{4k+1}} (\alpha,\beta)$ as follows (see Figure \ref{fig:phi}):

$$\widetilde{\Phi}(n\alpha + m\beta) := \begin{cases}
\phi^{-1} (n\alpha + m\beta) &\text{ if } n\alpha + m\beta \notin \phi (G), \\
\phi^{S_G (n\alpha + m\beta)} (n\alpha + m\beta) &\text{ if } n\alpha + m\beta \in \phi (G). 
\end{cases}$$



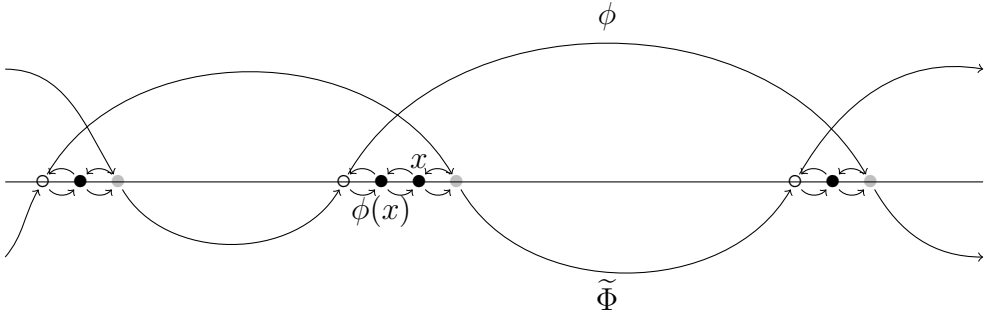
\begin{figure}
\begin{center}
\begin{tikzpicture}[every loop/.style={}]
  \tikzstyle{every node}=[inner sep=0pt]
  
  \node (0) at (0,0) {$\circ$}; 
  \node (1) at (.5,0) {$\bullet$};
  \node (2) at (1,0) {{\color{lightgray}$\bullet$}}; 
  \node (3) at (4,0) {$\circ$}; 
  \node (4) at (4.5,0) {$\bullet$};
 \node (5) at (5,0) {$\bullet$};
   \node [above=5pt] at (5,0) {$x$};
\node (6) at (5.5,0) {{\color{lightgray}$\bullet$}}; 
  \node (7) at (10,0) {$\circ$}; 
  \node (8) at (10.5,0) {$\bullet$};
  \node (9) at (11,0) {{\color{lightgray}$\bullet$}}; 
   \node [below=5pt] at (4.5,0) {$\phi(x)$};
  \draw[->] (-.5,1.5) to [out=0,in=120] (2);
  \draw[->] (0) to [out=60,in=120] (6);
  \draw[->] (3) to [out=60,in=120] (9) node at (7.5,2.2) {$\phi$};
  \draw[->] (7) to [out=60,in=170] (12.5,1.5);
  
  \draw[->] (1) to [out=130,in=50] (0);
  \draw[->] (2) to [out=130,in=50] (1);

  \draw[->] (4) to [out=130,in=50] (3);
  \draw[->] (5) to [out=130,in=50] (4);
  \draw[->] (6) to [out=130,in=50] (5);

  \draw[->] (8) to [out=130,in=50] (7);
  \draw[->] (9) to [out=130,in=50] (8);
  
  \draw  (-.5,0) -- (12.5,0);

  \draw[->] (-.5,-1) to [out=50,in=240] (0);

  \draw[->] (2) to [out=-60,in=240] (3);
  \draw[->] (6) to [out=-60,in=240] (7) node at (7.5,-1.5) {$\widetilde{\Phi}$};
  \draw[->] (9) to [out=-60,in=180] (12.5,-1);
  
  \draw[->] (0) to [out=-50,in=230] (1);
  \draw[->] (1) to [out=-50,in=230] (2);

  \draw[->] (3) to [out=-50,in=230] (4);
  \draw[->] (4) to [out=-50,in=230] (5);
  \draw[->] (5) to [out=-50,in=230] (6);

  \draw[->] (7) to [out=-50,in=230] (8);
  \draw[->] (8) to [out=-50,in=230] (9);

\end{tikzpicture} 
\end{center}
\caption{The points marked by $\circ$ are elements of $G$. Elements of   $\phi (G)$ are   marked in light gray.}
\label{fig:phi}
\end{figure}

The map $\widetilde{\Phi}$ is  a cyclic permutation on $E_{q_{4k+1},q'_{4k+1}} (\alpha,\beta)$. This is illustrated by the skyscraper  tower  construction of  Figure~\ref{fig:tower}
(see for instance  \cite[Page 40]{Petersen}).
 One can  check  that $ \widetilde{\Phi}$  coincides with the function  $\overline{\Phi}$ on $E_{q_{4k+1},q'_{4k+1}} (\alpha,\beta)$.
We will not use this fact in the proof but, as said before,   it aims at providing some insight on the organisation of the cases  that occur in the statement of Proposition \ref{prop:unbounded}.

\begin{figure}
\begin{tikzpicture}[every loop/.style={}]
  \tikzstyle{every node}=[inner sep=-1pt]
  \node (1) at (0,0) {};
  \node (1a) at (1.2,0) {$|$};
  \node (2) at (5,0) {} node [right=4pt] at (5,0) {$\phi (G)$};
  \node (3) at (1.2,1) {};
  \node (3a) at (2.4,1) {$|$};
  \node (4) at (5,1) {};
  \node (9) at (4,2.3) {$\vdots$};
  \node (5) at (3.3,3) {};
  \node (6) at (5,3) {} node [right=4pt] at (5,3) {$\curvearrowright  \phi $};
  \path[-] 
	(1)  edge node [above=4pt] {$G$} (1a)
	(1a)  edge node [above=4pt] {$\uparrow \ \phi$} (2)
	(3)  edge node [above=4pt] {$G$} (3a)
	(3a)  edge node [above=4pt] {$\uparrow \ \phi $} (4)
	(5)  edge node [above=4pt] {$G$} (6);
  \node (10) at (6.2,1.5) {$\searrow$};
  \node (11) at (6.8,3) {};
  \node (11a) at (8,3) {$|$};
  \node (12) at (12.2,3) {} node [right=4pt] at (12.2,3) {$\curvearrowright  \widetilde{ \Phi}$};
  \node (13) at (8,2) {};
  \node (13a) at (9.2,2) {$|$};
  \node (14) at (12.2,2) {};
  \node (19) at (11.2,1.2) {$\vdots$};
  \node (15) at (10.5,0) {};
  \node (15a) at (10.5,0) {};
  \node (16) at (12.2,0) {} node [right=4pt] at (12.2,0) {$G$};
  \path[-] 
	(11)  edge node [below=4pt] {$G$} (11a)
	(11a)  edge node [above=4pt] {$\phi (G)$} (12)
	(13)  edge node [below=4pt] {$G$} (13a)
	(13a)  edge node [above=4pt] {$\uparrow \ \widetilde{ \Phi}$} (14)
	(15)  edge node [above=4pt] {$\uparrow \  \widetilde{\Phi}$} (16);
\end{tikzpicture}
\caption{Each $k$-level of the  tower  is moved  to the level of  index $-k$, with  the indices of  tower on the left  being positive, and negative on the right.
The actions on the rooftops are  $\phi$ and $\widetilde{\Phi}$, respectively.} \label{fig:tower}
\end{figure}

\bigskip

We now come back  to the proof of  Proposition \ref{prop:unbounded}. Let us  count the number of points  $(n,m)$  taking  the same value  $\overline{\Delta}$.
There are 
\begin{enumerate}[label=(\alph*)]
\item   $(q_{4k+1}-q_{4k})q'_{4k}$ points such that $\overline{\Delta}(n,m) = \delta_k$ (Case (1));
\item   $q_{4k-1}   a_{4k+1}q'_{4k-1}$ points such that $\overline{ \Delta}(n,m)= \| q'_{4k+1} \beta \|-a_{4k+1}\delta_k$ (Case (2));
\item  $(q_{4k}- q_{4k-1}) (a_{4k+1}-1)q'_{4k-1} $ points such that $\overline{\Delta}(n,m) = \| q'_{4k+1} \beta \|-(a_{4k+1}-1)\delta_k$ (Case (3));
\item   $2q_{4k-1}q'_{4k-1}$ points  such that $\overline{\Delta}(n,m)= \| q_{4k+1} \alpha \|+ \| q'_{4k+1} \beta \|-2a_{4k+1}\delta_k$ (Case (4) and (6));
\item  $(q_{4k}- 2q_{4k-1})q'_{4k-1}$ points such that $\overline{\Delta}(n,m) = \| q_{4k+1} \alpha \|+\| q'_{4k+1} \beta \|-(2a_{4k+1}-1)\delta_k$ (Case (5));
\item   $2(q_{4k}- q_{4k-1}) q'_{4k-1} $ points  such that $\overline{\Delta}(n,m) = \| q_{4k+1} \alpha \|-a_{4k+1}\delta_k$ (Case (7) and (12));
\item   $2q_{4k} q'_{4k-1} $ points such that $\overline{\Delta}(n,m) = \| q_{4k+1} \alpha \| - (a_{4k+1} +c)\delta_k$ for $1 \le c \le a_{4k+1}-1$ (Case (8) and (11));
\item   another $(q_{4k}- 2q_{4k-1})(q'_{4k+1}- (2a_{4k+1} +1)q'_{4k-1})$ points  such that $\overline{\Delta}(n,m) = \| q_{4k+1} \alpha \|-(2a_{4k+1}-1) \delta_k$ (those points correspond to Case (10), Case (8), with $c= a_{4k+1}-1$, but  we do not take into account Case (11), with $c=1$);
\item  $2q_{4k-1} (q'_{4k+1}- 2a_{4k+1} q'_{4k-1}) $ points  such that $\overline{\Delta}(n,m)= \| q_{4k+1} \alpha \|-2a_{4k+1}\delta_k$ (Case (9)).
\end{enumerate}

As   noticed  in Remark~\ref{rmk:overlap}, there are overlaps between Case (8) and (10) ($c = a_{4k+1}-1$) and between Case (10) and (11) ($c=1$).
There  are $(q_{4k} - 2q_{4k-1}) q'_{4k-1}$ points in both intersections, thus in (h) the total number of points is
\begin{multline*}
(q_{4k}- 2q_{4k-1})(q'_{4k+1}- (2a_{4k+1} -1)q'_{4k-1}) - 2(q_{4k} - 2q_{4k-1}) q'_{4k-1} \\
= (q_{4k}- 2q_{4k-1})(q'_{4k+1}- (2a_{4k+1} +1)q'_{4k-1}) .
\end{multline*}


We denote the sum of all the lengths $\overline{\Delta}$  of  the intervals
 given in the statement  of the proposition  
as 
$$S : = \sum  _{0 \le n \le q_{4k+1}, 0 \le m < q'_{4k+1}} \overline{\Delta}(n,m) = S_0 + S_1 + S_2 + S_3,$$
where
$S_0$ corresponds to  Case (a), $S_1$  to Case (b) and (c),  $S_2$ to Case  (d) and (e), and $S_3$ to the other cases. This yields 
\begin{align*}
S_0 &:= (q_{4k+1}-q_{4k})q'_{4k} \delta_k, \\
S_1 &:= q_{4k-1}   a_{4k+1}q'_{4k-1} (\| q'_{4k+1} \beta \|-a_{4k+1}\delta_k)  \\
&\quad +(q_{4k}- q_{4k-1}) (a_{4k+1}-1)q'_{4k-1} (\| q'_{4k+1} \beta \|-(a_{4k+1}-1)\delta_k),  \\
S_2 &:= 2q_{4k-1}q'_{4k-1} (\| q_{4k+1} \alpha \|+ \| q'_{4k+1} \beta \|-2a_{4k+1}\delta_k)  \\
&\quad +(q_{4k}- 2q_{4k-1})q'_{4k-1} (\| q_{4k+1} \alpha \|+\| q'_{4k+1} \beta \|-(2a_{4k+1}-1)\delta_k).
\end{align*}
Let us prove that $S=1$.
 Since the sum of the  lengths $\overline \Delta$ for Case (g) is
\begin{align*}
&2q_{4k} q'_{4k-1} \sum_{c=1}^{a_{4k+1}-1} \left(  \| q_{4k+1} \alpha \| - (a_{4k+1} +c)\delta_k \right) \\
&= 2q_{4k} q'_{4k-1} (a_{4k+1}-1) \left(  \| q_{4k+1} \alpha \| - a_{4k+1} \delta_k \right) 
- 2q_{4k} q'_{4k-1} \frac{(a_{4k+1}-1)a_{4k+1}\delta_k}{2}  \\
&= 2q_{4k} q'_{4k-1} (a_{4k+1}-1)  \| q_{4k+1} \alpha \| - 3 q_{4k} q'_{4k-1} (a_{4k+1}-1)  a_{4k+1} \delta_k,
\end{align*}
we get
\begin{align*}
S_3 &:=  2(q_{4k}- q_{4k-1}) q'_{4k-1} ( \| q_{4k+1} \alpha \|-a_{4k+1}\delta_k )  \\
&\quad +2q_{4k} q'_{4k-1}  (a_{4k+1}-1) \| q_{4k+1} \alpha \|
- 3q_{4k} q'_{4k-1} a_{4k+1} (a_{4k+1}-1) \delta_k \\
&\quad +(q_{4k}- 2q_{4k-1})(q'_{4k+1}- (2a_{4k+1} +1)q'_{4k-1}) (\| q_{4k+1} \alpha \|-(2a_{4k+1}-1) \delta_k) \\
&\quad +2q_{4k-1} (q'_{4k+1}- 2a_{4k+1} q'_{4k-1}) (\| q_{4k+1} \alpha \|-2a_{4k+1}\delta_k).
\end{align*}
Then we get
\begin{align*}
S_1 &= \left( q_{4k+1} - q_{4k} \right)q'_{4k-1}  \| q'_{4k+1} \beta \|  \\
&\quad -  \left( (a_{4k+1}-1) (q_{4k+1} - q_{4k} )  + a_{4k+1}q_{4k-1} \right) q'_{4k-1} \delta_k, \\
S_2 &= q_{4k} q'_{4k-1} (\| q_{4k+1} \alpha \|+ \| q'_{4k+1} \beta \| ) - (2q_{4k+1} - q_{4k} ) q'_{4k-1} \delta_k, \\ 
S_3 &= q_{4k} ( q'_{4k+1} - q'_{4k-1} ) \| q_{4k+1} \alpha \|   \\
&\quad + \left(  (a_{4k+1} +1)( q_{4k+1} + q_{4k-1} ) -  q_{4k}  \right) q'_{4k-1} \delta_k - ( 2q_{4k+1} - q_{4k} ) q'_{4k+1} \delta_k.
\end{align*}
Therefore, we have 
\begin{align*}
S&= q_{4k}  q'_{4k+1} \| q_{4k+1} \alpha \|+ q_{4k+1} q'_{4k-1}  \| q'_{4k+1} \beta \| \\
&\quad + (q_{4k+1}-q_{4k})q'_{4k} \delta_k +  \left( q_{4k+1} - q_{4k} \right) q'_{4k-1} \delta_k - ( 2q_{4k+1} - q_{4k} ) q'_{4k+1} \delta_k \\ 
&= q_{4k}  q'_{4k+1} \| q_{4k+1} \alpha \|+ q_{4k+1} q'_{4k-1}  \| q'_{4k+1} \beta \| - q_{4k+1} q'_{4k+1} \delta_k \\ 
&= q_{4k}  q'_{4k+1} \| q_{4k+1} \alpha \| + q_{4k+1} q'_{4k-1}  \| q'_{4k+1} \beta \| \\
&\quad - q_{4k+1}  q'_{4k+1} ( \| q'_{4k} \beta \| +\| q'_{4k+1} \beta \| - \| q_{4k}\alpha \|)  \\
&= q'_{4k+1}  \left( q_{4k}  \| q_{4k+1} \alpha \| +q_{4k+1} \| q_{4k}\alpha \| \right)
 - q_{4k+1} \left( q'_{4k}  \| q'_{4k+1} \beta \| + q'_{4k+1} \| q'_{4k} \beta \| \right) \\
&= q'_{4k+1} - q_{4k+1} = 1.
\end{align*}
Hence, the intervals  $\left(n\alpha+m\beta, \overline{\Phi} (n\alpha + m\beta)\right)$ never overlap (as intervals of ${\mathbb T}$),
which implies that $\overline{\Phi} (n\alpha + m\beta)$ is the  neighbor point of $n\alpha + m\beta$, that is,  $  \overline{\Phi}=  \Phi $, which ends the proof of  Proposition \ref{prop:unbounded}.
\end{proof}

\subsection*{End of the proof} 

According to  Cases  (11), (12) of  Proposition~\ref{prop:unbounded}, one has, 
for  $n = c q_{4k}+ 2q_{4k-1}$, $0 \le c \le a_{4k+1}-1$ and $ m = 0$: 
\begin{align*}
&\Phi_{q_{4k+1},q'_{4k+1}}( (c q_{4k}+ 2q_{4k-1})\alpha + 0\beta)  \\
&\quad = ( (2a_{4k+1} -1) q_{4k} - q_{4k+1} + 2q_{4k-1}) \alpha + (2a_{4k+1} - c -1)q'_{4k-1} \beta \\
&\quad = ( q_{4k+1}  - q_{4k}) \alpha + (2a_{4k+1} - c -1)q'_{4k-1} \beta.
\end{align*}

Let $N =  q_{4k+1}=  q'_{4k+1} -1$. 
For each $0 \le c \le a_{4k+1}-1$,   the  following pair of points belongs to $E_{N} (\alpha,\beta)$:
\begin{align*}
 ( c q_{4k}+ 2q_{4k-1}) \alpha &\in E_{N}(\alpha,\beta), \\
 ( q_{4k+1}  - q_{4k})\alpha +  (2a_{4k+1} - c -1)q'_{4k-1} \beta  &\in  E_{N}(\alpha,\beta).
\end{align*}
Since $  E_{N}(\alpha,\beta)   \subset  E_{q_{4k+1},q'_{4k+1}} (\alpha,\beta) $ and the pairs of points  above are adjacent points of $E_{q_{4k+1},q'_{4k+1}} (\alpha,\beta) $,  we have 
 for each $0 \le c \le a_{4k+1}-1$:
\begin{align*}
&( q_{4k+1}  - q_{4k})\alpha +  (2a_{4k+1} - c -1)q'_{4k-1} \beta -  ( c q_{4k}+ 2q_{4k-1}) \alpha  \\
&\quad = ( (2a_{4k+1} -c - 1)q_{4k} - q_{4k+1} )\alpha + (2a_{4k+1} - c -1)q'_{4k-1} \beta \\
&\quad  = \| q_{4k+1} \alpha \| - (2a_{4k+1} -c -1)\delta_k
\in \Delta (E_{N} (\alpha,\beta) ). \end{align*}

Since the sequence  of partial quotients $(a_{4k+1})_k$ goes to infinity, 
we conclude that 
$$ \limsup_{N \to +\infty} \# \Delta E_{N} (\alpha,\beta) = \infty,$$
which  completes the proof of Theorem~\ref{thm} (ii). 
\bibliographystyle{amsalpha}
\bibliography{3gap}
\end{document}